\newcommand*{\footnotemarkcolor}{red}
\renewcommand*{\@makefnmark}{\hbox{\@textsuperscript{%
   \color{\footnotemarkcolor}\normalfont\@thefnmark}}}
\def\@fnsymbol#1{\ensuremath{\ifcase#1\or \text{\Mercury}
\or \text{\Venus} \or \text{\Earth} \or
 \text{\Jupiter} \or \text{\Saturn} \or \text{\Neptune} \or \text{\Uranus} \or \text{\Pluto}
   \or \text{\Moon} \or \text{\Sun}
\else\@ctrerr\fi}}%
\newtheoremstyle{ptheorem}{1em}{0em}{\itshape}{}{\bfseries}{.}{.5em}{\thmname{#1}\thmnumber{ #2}\thmnote{ (\hspace{-.01pt}{#3})}}
\theoremstyle{ptheorem}
\newtheorem{thm}{Theorem}[section]
\newtheorem{lem}[thm]{Lemma}
\newtheoremstyle{hdef}{1em}{0em}{}{}{\bfseries}{.}{.5em}{\thmname{#1}\thmnumber{ #2}\thmnote{ (\hspace{-.01pt}{#3})}}
\theoremstyle{hdef}
\newtheorem{rem}[thm]{Remark}
\newtheoremstyle{premark}{1em}{0em}{
\addtolength{\@totalleftmargin}{1.5em}
\addtolength{\linewidth}{-1.5em}
\parshape 1 1.5em \linewidth}{}{\scshape}{.}{.5em}{}
\theoremstyle{premark}
\numberwithin{equation}{section}
\numberwithin{figure}{section}
\DeclareMathOperator{\dif}{d}
\newcommand{\cC}{{\mathcal C}}
\newcommand{\bN}{{\mathbb N}}
\newcommand{\bR}{{\mathbb R}}
\newcommand{\bZ}{{\mathbb Z}}
\renewcommand{\c}{\gamma}
\newcommand{\e}{\epsilon}
\newcommand{\ol}{\overline}
\renewcommand{\d}{\delta}
\renewcommand{\(}{\left(}
\renewcommand{\)}{\right)}
\newcommand{\lil}{\lim\limits}
\newcommand{\til}{\widetilde}
\newcommand{\Lsp}[1]{\operatorname{L^{#1}}}
\newcommand{\olb}[1]{%
  \vbox{\offinterlineskip\ialign{\hfil##\hfil\cr $\rotatebox[origin=c]{90}{$]$}$\cr\noalign{\kern-.45ex}{$#1$}\cr}}}
\begin{document}
\title{Existence of solutions of integral equations with asymptotic conditions\footnote{Partially supported by Xunta de Galicia (Spain), project EM2014/032 and AIE Spain and FEDER, grants MTM2013-43014-P, MTM2016-75140-P.}}

\author{
Alberto Cabada\\
\normalsize e-mail: alberto.cabada@usc.es\\
Luc\'ia L\'opez-Somoza\footnote{Supported by  FPU scholarship, Ministerio de Educaci\'on, Cultura y Deporte, Spain.} \\
\normalsize e-mail: lucia.lopez.somoza@usc.es\\
F. Adri\'an F. Tojo \\
\normalsize e-mail: fernandoadrian.fernandez@usc.es\\
\normalsize \emph{Instituto de Ma\-te\-m\'a\-ti\-cas, Facultade de Matem\'aticas,} \\ \normalsize\emph{Universidade de Santiago de Com\-pos\-te\-la, Spain.}\\ 
}
\date{}

\maketitle


\begin{abstract}
In  this work we will consider integral equations defined on the whole real line and look for solutions which satisfy some certain kind of asymptotic behavior. To do that, we will define a suitable Banach space which, to the best of our knowledge, has never been used before. In order to obtain fixed points of the integral operator, we will consider the fixed point index theory and apply it to this new Banach space.
\end{abstract}

\renewcommand{\abstractname}{Acknowledgements}
\begin{abstract}
	The authors want to acknowledge his gratitude towards Prof. Santiago Codesido for his insights concerning the projectile equation. 
\end{abstract}


\section{Introduction}
In this paper we study the existence of fixed points of integral operators of the form
\[Tu(t)=p(t)+\int_{-\infty}^{\infty}k(t,s)\,\eta(s)\,f(s,u(s))\, \dif s.\]

There are many results in the recent literature in which the authors deal with differential or integral problems in unbounded intervals (see for instance \cite{MinCar,MinCar2,MinCar-4ord,FiMinCar,DjeGue} and the references therein). The main difficulties which appear while dealing with this kind of problems arise as a consequence of the lack of compactness of the operator. In all of the cited references the authors solve this problem by means of the following relatively compactness criterion (see \cite{Corduneanu,przerad}) which involves some stability condition at $\pm\infty$:

\begin{thm}[{\cite[Theorem 1]{przerad}}] \label{thm_equic}
Let $E$ be a Banach space and $\cC(\mathbb{R},E)$ the space of all bounded continuous functions $x\colon \bR\rightarrow E$. For a set $D\subset \cC(\bR,E)$ to be relatively compact, it is necessary and sufficient that:
\begin{enumerate}
	\item $\{x(t), \ x\in D\}$ is relatively compact in $E$ for any $t\in\bR$;
	\item for each $a>0$, the family $D_a:=\{x|_{[-a,a]}, \ x\in D \}$ is equicontinuous;
	\item $D$ is stable at $\pm\infty$, that is, for any $\varepsilon>0$, there exists $T>0$ and $\delta>0$ such that if $\|x(T)-y(T)\|\le \delta$, then $\|x(t)-y(t)\|\le \varepsilon$ for $t\ge T$ and if $\|x(-T)-y(-T)\|\le \delta$, then $\|x(t)-y(t)\|\le \varepsilon$ for $t\le -T$, where $x$ and $y$ are arbitrary functions in $D$.
\end{enumerate}
\end{thm}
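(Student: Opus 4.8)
The plan is to prove the equivalence by establishing the two implications separately, working throughout with the fact that $\cC(\bR,E)$ under the supremum norm is a Banach space, so that a subset is relatively compact precisely when every sequence in it admits a subsequence converging uniformly on $\bR$ to a bounded continuous function.

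For the sufficiency I would start from a sequence $(x_n)\subset D$ and first localize. On each interval $[-m,m]$ with $m\in\bN$, hypotheses (1) and (2) are exactly what the classical Arzel\`a--Ascoli theorem requires, so $\{x_n|_{[-m,m]}\}$ is relatively compact in $C([-m,m],E)$; a diagonal argument over $m$ then produces a subsequence $(x_{n_k})$ converging uniformly on every compact subset of $\bR$ to a continuous function $x$. The remaining, and only nontrivial, step is to promote this to uniform convergence on all of $\bR$, and this is where (3) enters: given $\varepsilon>0$, choose $T$ and $\delta$ as in (3); since $x_{n_k}(\pm T)\to x(\pm T)$ in $E$, for $k,l$ large one has $\|x_{n_k}(\pm T)-x_{n_l}(\pm T)\|\le\delta$, so (3), applied to the pair $x_{n_k},x_{n_l}\in D$, forces $\|x_{n_k}(t)-x_{n_l}(t)\|\le\varepsilon$ for all $t$ with $|t|\ge T$. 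Combined with the uniform convergence of $(x_{n_k})$ on $[-T,T]$, this shows $(x_{n_k})$ is uniformly Cauchy on $\bR$, hence uniformly convergent to a bounded continuous function, which must be $x$. Thus $D$ is relatively compact.

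For the necessity, assume $D$ is relatively compact. Properties (1) and (2) are the routine part: the evaluation $x\mapsto x(t)$ and the restriction $x\mapsto x|_{[-a,a]}$ are norm non-increasing maps, hence carry $D$ to relatively compact subsets of $E$ and of $C([-a,a],E)$ respectively, and Arzel\`a--Ascoli converts the latter into equicontinuity of $D_a$. Property (3) is where the real difficulty lies, and I would argue by contradiction: if stability at, say, $+\infty$ failed for some $\varepsilon>0$, then taking $T=n$ and $\delta=1/n$ would yield $x_n,y_n\in D$ with $\|x_n(n)-y_n(n)\|\le 1/n$ but $\|x_n(s_n)-y_n(s_n)\|>\varepsilon$ for some $s_n\ge n$; after passing to uniformly convergent subsequences $x_n\to x$ and $y_n\to y$, I would seek a contradiction. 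The hard part will be exactly this last move: the diagonal limit by itself does not close the argument, and one must bring in the total boundedness of $D$ --- fixing a finite $\varepsilon/3$-net beforehand --- and then select the threshold $T$ with care, so that it avoids the comparatively sparse set of times at which two net elements with large asymptotic separation happen to be momentarily close. This delicate selection of $T$, rather than the otherwise routine compactness bookkeeping, is the main obstacle; it is also what makes condition (3) strictly weaker than the more familiar requirement that $D$ be equiconvergent at $\pm\infty$.
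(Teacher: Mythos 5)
This statement is quoted from Przeradzki \cite{przerad} and the paper offers no proof of it --- it appears only to describe the compactness criterion that other authors rely on, which the present paper deliberately avoids --- so your proposal can only be judged on its own terms. Your sufficiency argument is correct and complete in outline: conditions (1) and (2) feed the vector-valued Arzel\`a--Ascoli theorem on each $[-m,m]$, a diagonal extraction gives uniform convergence on compacta, and applying (3) to the pairs $x_{n_k},x_{n_l}$ at the times $\pm T$ upgrades this to the uniform Cauchy property on the tails. The necessity of (1) and (2) is likewise routine, exactly as you describe.

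The genuine gap is where you yourself located it, in the necessity of (3), but it is worse than you suggest: the device you propose --- choosing the threshold $T$ so as to avoid a ``comparatively sparse'' set of times at which two net elements with large asymptotic separation are momentarily close --- cannot work, because that set of times need not be sparse; it can be all of $[0,\infty)$. Concretely, take $E=\bR$, let $f$ be the bounded continuous function vanishing on $(-\infty,0]\cup\bigcup_{k\ge0}[2k,2k+1]$ and equal to a bump of height $1$ on each $[2k+1,2k+2]$, and let $g(t)=f(t-1)$, so that $f$ and $g$ have interleaved supports and $\min\{f(t),g(t)\}=0$ for every $t$. The set $D=\{0,f,g\}$ is finite, hence compact, and trivially satisfies (1) and (2); yet for $\varepsilon=1/2$ no pair $(T,\delta)$ can witness (3), since at every $T$ one of $f(T),g(T)$ equals $0$, so one of the pairs $(0,f)$, $(0,g)$ satisfies the hypothesis $\|x(T)-y(T)\|=0\le\delta$ while $\sup_{t\ge T}\|x(t)-y(t)\|=1>\varepsilon$. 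Thus the necessity of (3), as literally stated here, is false, and no careful selection of $T$ can close your contradiction argument; the total-boundedness bookkeeping you sketch produces only two functions $x,y$ in $\overline D$ with $\liminf_{t\to\infty}\|x(t)-y(t)\|=0<\varepsilon\le\limsup_{t\to\infty}\|x(t)-y(t)\|$, which is not contradictory (the pair $(0,f)$ above realizes it inside a compact set). Any complete proof must therefore start from a reformulated stability condition (for instance equiconvergence at $\pm\infty$, or a version of (3) whose hypothesis is quantified differently), not from the statement as transcribed. Since the paper uses this theorem only descriptively, nothing downstream is affected, but your plan for this half of the equivalence cannot be completed as written.
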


By using the previous result, the authors of the aforementioned references prove the existence of solutions of differential or integral problems by means of either Schauder's fixed point Theorem or lower and upper functions method. 

In this paper, we will deal with the problem of compactness of the integral operator using a different strategy: we will define a suitable Banach space, which will be proved to be isometric isomorphic to the space 
\[\cC^n(\ol\bR,\bR):=\left\{f:\ol\bR\to\bR\ :\ f|_{\bR}\in\cC^n(\bR,\bR),\ \exists \lim_{t\to\pm\infty}f^{(j)}(t) \in \bR,\ j=0,\dots,n\right\}.\]
This isomorphism will allow us to apply Arcel\`a-Ascoli's Theorem to our Banach space instead of using Theorem \ref{thm_equic}. 

Moreover, the Banach space that we will define will include some asymptotic condition which will ensure a certain asymptotic behavior of the solutions of the problem. Later on, we will use index theory in general cones \cite{FigToj} to obtain the desired fixed points.

The paper is divided in the following way: in Section 2 we present a physical problem which motivates the importance of the asymptotic behavior of solutions of a differential equation. In Section 3 we first summarize classical definitions of asymptotic behavior and then define a suitable Banach space and study its properties. Section 4 includes results of existence of fixed points of integral equations by means of the theory of fixed point index in cones. Finally, in Section 5 we will reconsider the physical problem presented in Section 2 and we will solve it by using the results given in Section 4.

\section{Motivation}

In many contexts it is interesting to anticipate the asymptotic behavior of the solution of a differential problem. For instance,  consider the classical projectile equation that describes the motion of an object that is launched vertically from the surface of a planet towards deep space \cite{Holmes},
\begin{equation}\label{eqorr}
u''(t)=-\frac{g\,R^2}{(u(t)+R)^2},\ t\in[0,\infty);\ u(0)=0,\ u'(0)=v_0,
\end{equation}
where $u$ is the distance from the surface of the planet, $R$ is the radius of the planet, $g$ is the surface gravity constant and $v_0$ the initial velocity. Clearly, if $v_0$ is not big enough, the projectile will reach a maximum height, at which $u'$ will be zero, and then fall. Hence, in order to compute the minimum velocity necessary for the projectile to escape the planet's gravity, it is enough to consider that $u(t)\to\infty$ and $u'(t)\to 0$. Then, multiplying both sides of \eqref{eqorr} by $u'$ and integrating between $0$ and $t$,
\[\frac{1}{2}[(u'(t))^2-v_0^2]=g\,R^2\left[\frac{1}{R+u(t)}-\frac{1}{R}\right].\]
Thus, taking the limit when $t\to\infty$, $-v_0^2/2=-gR$, that is, the scape velocity is $v_s=\sqrt{2gR}$. Observe that, with $v_0=v_s$, we have
\[u'(t)=\sqrt\frac{2gR^2}{u(t)+R}.\]
Using the same argument, for any initial velocity higher than $v_s$, when the projectile is far enough from the planet, it should drift away at constant velocity given by $v_\infty=\sqrt{v_0^2-2gR}$.

Now, the solution of \eqref{eqorr} has a interesting asymptotic behavior. For $v_0>v_s$, it is asymptotically linear as was previously said. This can be checked using L'Hopital's rule.
 \[\lim_{t\to\infty}\frac{u(t)}{t}=\lim_{t\to\infty}u'(t)=v_\infty.\]
 In the particular case $v_0=v_s$ we have that $v_\infty=0$ and
\begin{align*}\lim_{t\to\infty}\frac{u(t)}{t^\frac{2}{3}} & =\left[\lim_{t\to\infty}\frac{u(t)^\frac{3}{2}}{t}\right]^\frac{2}{3}=\left[\frac{3}{2}\lim_{t\to\infty}u(t)^\frac{1}{2}u'(t)\right]^\frac{2}{3}=\left[\frac{3}{2}\lim_{t\to\infty}u(t)^\frac{1}{2}\sqrt\frac{2gR^2}{u(t)+R}\right]^\frac{2}{3} \\ & =\left[\frac{3}{2}\sqrt{2gR^2}\right]^\frac{2}{3}=\(\frac{3}{2}\)^\frac{2}{3}\sqrt[3]{2gR^2}.\end{align*}

In a more realistic setting, with a self propelled projectile, we could consider 
\begin{equation}\label{eqorr2}
u''(t)=-\frac{gR^2}{(u(t)+R)^2}+h(t,u(t))-\rho(u(t))u'(t),\ t\in[0,\infty);\ \ u(0)=0,\ u'(0)=v_0,
\end{equation}
where $h(t,y)$ is the acceleration generated by the propulsion system of the rocket (which depends on time and also height, since different phases of the launch require different propulsion systems) and $\rho$ is the friction coefficient, which depends on height since it relates to atmospheric drag. The friction term is expected to not affect the asymptotic behavior of the solution (the atmosphere is finite, and therefore $\rho$ has compact support), so it would be interesting to study for what kinds of $h$ when it would be reasonable to expect the same asymptotic behavior as that of the solution of \eqref{eqorr}. In any case, we would have to define first what we  understand by \emph{asymptotic behavior}.

\section{Asymptotic behavior}
\subsection{Classical ways of dealing with asymptotic behavior}
Asymptotic behavior, always associated to perturbation theory in Physics, has been studied for a long time in an abstract mathematical  way. For instance, if we go to the book of G. H. Hardy \emph{Orders of Infinity} \cite{Hardy}, we find the following notions:
\begin{quote}
\begin{itshape}
``Let us suppose that $f$ and $\varphi$ are two functions of the continuous 
variable $x$, defined for all values of $x$ greater than a given value $x_0$. 
Let us suppose further that $f$ and $\varphi$ are positive, continuous, and 
steadily increasing functions which tend to infinity with $x$; and let us 
consider the ratio $f/\varphi$. We must distinguish four cases:
\begin{itemize}
\item If $f/\varphi\to\infty$ with $x$, we shall say that the \emph{rate of increase}, or 
simply the \emph{increase}, of $f$ is greater than that of $\varphi$, and shall write
\[f\succ\varphi.\]
\item If $f/\varphi\to 0$, we shall say that the increase of $f$ is less than that 
of $\varphi$, and write 
\[f\prec\varphi.\]
\item If $f/\varphi$ remains, for all values of $x$ however large, between two 
fixed positive numbers $\delta$, $\Delta$, so that $0 < \delta <f/\varphi < \Delta$, we shall say that 
the increase of $f$ is equal to that of $\varphi$ , and write 
\[f\asymp\varphi.\]
It may happen, in this case, that $f/\varphi$ actually tends to a definite 
limit. If this is so, we shall write 
\[f\mathrlap{\ -}\asymp\varphi.\]
Finally, if this limit is \emph{unity}, we shall write 
\[f\sim\varphi.\]
\item If a positive constant $\d$ can be found such that $f>\d\varphi$ for all 
sufficiently large values of $x$, we shall write 
\[f\succcurlyeq\varphi;\]
and if a positive constant $\Delta$ can be found such that $f<\Delta\varphi$ for all 
sufficiently large values of $x$, we shall write 
\[f\preccurlyeq\varphi.\text{''}\]
\end{itemize}
\end{itshape}
\end{quote}

Hence, it is clear that there are several ways to approach this issue. The case of $f\preccurlyeq\varphi$ (also written  as $f=O(\varphi)$ in the notation of Landau) is the one used in the study of computational complexity \cite{Sipser}. \par
On the other hand, we find this kind of asymptotic behavior in \emph{fading memory spaces} \cite{Kuang}, but also in \emph{weighted spaces} \cite{SinMa}, where the comportment   can also be that associated to $f\prec\varphi$, noted as $f=o(\varphi)$ as well \cite{Sipser}.

The aforementioned notions of asymptotic behavior are connected trough the exponential map to their corresponding ones using the difference instead of the quotient. To be explicit, consider the exponential map
	\begin{center}\begin{tikzcd}[row sep=tiny]
	\cC(\bR,\bR) \arrow{r}{\operatorname{exp}} & \cC(\bR,\bR^+) \\
f\arrow[hook]{r} & e^f
	\end{tikzcd}
	\end{center}
 where $\bR^+=(0,\infty)$. Thus, for every $f,\varphi\in\cC(\bR,\bR)$,
\begin{itemize}
\item $\lim\limits_{x\to\infty}(f-\varphi)=\infty$ if and only if $e^f\succ e^\varphi$.

\item $\lim\limits_{x\to\infty}(f-\varphi)=-\infty$ if and only if $e^f\prec e^\varphi$.
\item  $|f-\varphi|$ is bounded if and only if $e^f\asymp e^\varphi$.
\item $\lim\limits_{x\to\infty}(f-\varphi)=L\in\bR$ if and only if $e^f\mathrlap{\ -}\asymp e^\varphi$.
\item $\lim\limits_{x\to\infty}(f-\varphi)=0$ if and only if $ e^f\sim\e^\varphi$.
\item A constant $\d\in\bR$ can be found such that $f-\varphi>\d$ for all 
sufficiently large values of $x$  if and only if $e^f\succcurlyeq e^\varphi$.
\item A constant $\Delta\in\bR$ can be found such that $f-g<\Delta$ for all 
sufficiently large values of $x$ if and only if $e^f\preccurlyeq e^\varphi$.
\end{itemize}

Needless to say, the all of the aforementioned definitions can be applied to non necessarily positive functions with due precautions.

In this work we will center our discussion in the case $f\mathrlap{\ -}\asymp\varphi$. In order to to so, we will need a conveniently defined Banach space which is not among the mentioned above.

\subsection{The space of continuously $\boldmath{n}$-differentiable $\boldmath{ \varphi}$-extensions to infinity}
Consider the space $\ol \bR:=[-\infty,\infty]$ with the compact topology, that is, the topology generated by the basis
\[\{B(a,r)\ :\ a\in\bR,\ r\in\bR^+\}\cup\{[-\infty,a)\ :\ a\in\bR\}\cup\{(a,\infty]\ :\ a\in\bR\}.\]
With this topology, $\ol \bR$ is homeomorphic to any compact interval of $\bR$ with the relative topology inherited from the usual topology of $\bR$.\par
It is easy to check that $\cC(\ol\bR,\bR)$ is a Banach space with the usual supremum norm. We define, in a similar way,
\[\cC^n(\ol\bR,\bR):=\left\{f:\ol\bR\to\bR\ :\ f|_{\bR}\in\cC^n(\bR,\bR),\ \exists \lim_{t\to\pm\infty}f^{(j)}(t) \in \bR,\ j=0,\dots,n\right\},\]
 for $n\in\bN$. $\cC^n(\ol\bR,\bR)$, $n\in\bN$, is a Banach space with the norm \[\|f\|_{(n)}:=\sup\left\{\left\|f^{(k)}\right\|_\infty\ :\ k=0,\dots,n\right\}.\]
Take now $\varphi\in\cC^n(\bR,\bR^+)$ and define the \emph{space of continuously $n$-differentiable $ \varphi$-extensions to infinity}
\[\widetilde\cC^n_\varphi\equiv\widetilde\cC^n_\varphi(\bR,\bR)=\left\{f\in\cC^n(\bR,\bR)\ :\ \exists \til f\in\cC^n(\ol\bR,\bR),\  f=\varphi \left(\til f|_{\bR}\right)\right\}.\]
We define the norm
\[\|f\|_\varphi:=\|\til f\|_{(n)},\ f\in\widetilde\cC_\varphi.\]
$\|\cdot\|_\varphi$ is well defined, since the extension $\til f$ is unique for every $f$; indeed, assume there are $\til f_1$, $\til f_2$ such that $\til f_1\,\varphi=\til f_2\,\varphi=f$ in $\bR$. Since $\bR$ is dense in $\ol \bR$ and $\til f_1$ and $\til f_2$ are continuous, $\til f_1=\til f_2$.\par
On the other hand, for every $\til f\in\cC^n(\ol\bR,\bR)$ there exists a unique $f\in\widetilde\cC_\varphi$ such that $\til f|_{\bR}\,\varphi=f$ (just define $f:=\til f\,\varphi$ in $\bR$). 

This shows that there is an isometric isomorphism
\begin{equation*}\begin{split}
\Phi:\cC^n(\ol\bR,\bR)&\to\widetilde\cC^n_\varphi \\
\tilde{f}&\mapsto \Phi(\tilde{f})=\tilde{f}|_{\bR}\,\varphi,
\end{split}\end{equation*}
whose inverse isomorphism is 
\begin{equation*}\begin{split}
\Phi^{-1}:\widetilde\cC^n_\varphi &\to \cC^n(\ol\bR,\bR)\\
f&\mapsto \Phi^{-1}(f)=f/\varphi .
\end{split}\end{equation*}

Furthermore, Arcel\`a-Ascoli's Theorem applies to $\cC^n(\ol\bR,\bR)$ since $\ol\bR$ is a Hausdorff compact topological space and $\bR$ is a complete metric space. Using $\Phi$ we can apply the Theorem to $\widetilde\cC^n_\varphi$. To be precise,

\begin{thm}[Arcel\`a-Ascoli \cite{kelley}] Let $X$ be a Hausdorff compact topological space and $Y$ a complete metric space, and consider $\cC(X,Y)$ with the topology of the uniform convergence. Then $F\subset\cC(X,Y)$ has compact closure if and only if 
\begin{itemize}
\item $F(x)$ has compact closure for each $x\in X$, and
\item $F$ is equicontinuous. 
\end{itemize}
\end{thm}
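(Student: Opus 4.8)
This final statement is the classical Arcel\`a-Ascoli theorem in the setting of a compact Hausdorff domain and a complete metric codomain, so the plan is to run the usual total-boundedness argument, carried out at the level of generality needed here. Throughout I would equip $\cC(X,Y)$ with the uniform metric $\rho(f,g)=\sup_{x\in X}d(f(x),g(x))$, which is finite since $X$ is compact and $f,g$ are continuous, and use without further comment that $\cC(X,Y)$ is complete because $Y$ is. Since a subset of a complete metric space has compact closure if and only if it is totally bounded, both implications become statements about total boundedness and about the evaluation maps $e_x\colon\cC(X,Y)\to Y$, $f\mapsto f(x)$, which are $1$-Lipschitz, hence continuous.

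For sufficiency, assume $F$ is equicontinuous and each $F(x)$ has compact closure; I would prove $F$ is totally bounded. Fix $\varepsilon>0$. By equicontinuity, for every $x\in X$ there is an open neighbourhood $U_x$ with $d(f(y),f(x))<\varepsilon$ for all $y\in U_x$ and all $f\in F$; compactness of $X$ yields a finite subcover $U_{x_1},\dots,U_{x_n}$. The finite union $K=\overline{F(x_1)}\cup\cdots\cup\overline{F(x_n)}$ is compact in $Y$, hence totally bounded, so it is covered by balls $B(y_1,\varepsilon),\dots,B(y_m,\varepsilon)$. For each map $\sigma\colon\{1,\dots,n\}\to\{1,\dots,m\}$ set $F_\sigma=\{f\in F: d(f(x_i),y_{\sigma(i)})<\varepsilon\ \text{for all } i\}$; since each $f(x_i)$ lies in $K$, the finitely many sets $F_\sigma$ cover $F$. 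Finally, for $f,g\in F_\sigma$ and $x\in X$, choosing $i$ with $x\in U_{x_i}$ and inserting $f(x_i)$, $y_{\sigma(i)}$, $g(x_i)$ into a fourfold triangle inequality gives $d(f(x),g(x))<4\varepsilon$, hence $\rho(f,g)\le 4\varepsilon$. Thus $F$ is covered by the finitely many sets $F_\sigma$, each of $\rho$-diameter at most $4\varepsilon$; as $\varepsilon$ was arbitrary, $F$ is totally bounded, so $\overline F$ is compact.

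For necessity, assume $\overline F$ is compact. For each $x$, $e_x(\overline F)$ is the continuous image of a compact set, hence compact and closed, and it contains $F(x)$, so $\overline{F(x)}$ is compact. For equicontinuity, fix $x\in X$ and $\varepsilon>0$; by total boundedness of $\overline F$ choose a finite set $\{f_1,\dots,f_k\}\subseteq F$ such that every $f\in F$ satisfies $\rho(f,f_j)<\varepsilon/3$ for some $j$, and by continuity of each $f_j$ at $x$ pick a neighbourhood $U$ of $x$ on which $d(f_j(y),f_j(x))<\varepsilon/3$ for all $j$. Given $f\in F$, take $j$ with $\rho(f,f_j)<\varepsilon/3$; then for $y\in U$ the triangle inequality through $f_j(y)$ and $f_j(x)$ yields $d(f(y),f(x))<\varepsilon$, which is equicontinuity of $F$ at $x$.

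The routine ingredients (completeness of $\cC(X,Y)$, and the equivalence compact $\Leftrightarrow$ complete plus totally bounded) are standard; the only place the compact-Hausdorff hypothesis is used rather than a metric one is the extraction of the finite subcover $U_{x_1},\dots,U_{x_n}$, and the main bookkeeping obstacle is organising the double finiteness — finitely many neighbourhoods in $X$, finitely many balls in $Y$ — into the finite family $\{F_\sigma\}$ of small-$\rho$-diameter pieces. Since this is precisely the form of Arcel\`a-Ascoli recorded in \cite{kelley}, one may of course simply invoke it; the sketch above is the argument behind that citation.
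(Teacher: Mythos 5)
Your proof is correct: the sufficiency direction via total boundedness of $F$ in the complete uniform metric (finite subcover from equicontinuity, finite $\varepsilon$-net in the compact set $K$, and the finite family $F_\sigma$ of small-diameter pieces) and the necessity direction via the evaluation maps and the $\varepsilon/3$-net argument are both sound. Note, however, that the paper offers no proof of this statement at all --- it is quoted verbatim as a classical theorem with a citation to Kelley's \emph{General Topology} --- so there is no argument in the paper to compare yours against; what you have written is the standard proof underlying that citation, and it is an acceptable substitute for simply invoking the reference.
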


If we write this Theorem in terms of $\til \cC^n_\varphi$ using the isomorphism $\Phi$ we get the following Theorem.

\begin{thm}
$F\subset \til \cC_\varphi^n$ has compact closure if and only if the two following conditions are satisfied:
\begin{itemize}
	\item  For each $t\in\bR$ the set $\{\til f(t), \ f\in F\}$ has compact closure or, which is the same (since $\til f(t) \in \bR$), $\{\til f(t), \ f\in F\}$ is bounded, that is, for each $t\in\bR$ there exists some constant $M>0$ such that
	\[\left|\frac{\partial^j \til f}{\partial t^j}(t)\right|=\left|\frac{\partial^j (f/\varphi)}{\partial t^j}(t)\right|\le M<\infty,\]
	for all $j=0,\dots,n$ and $f\in F$.
	\item $F$ is equicontinuous, that is, for all $\varepsilon\in \bR^+$ there exists some $\delta\in\bR^+$ such that 
	\[\left|\frac{\partial^j \til f}{\partial t^j}(r)- \frac{\partial^j \til f}{\partial t^j}(s)\right|=\left|\frac{\partial^j (f/\varphi)}{\partial t^j}(r)- \frac{\partial^j (f/\varphi)}{\partial t^j}(s)\right|<\varepsilon,\]
	for all $j=0,\dots,n$, $f\in F$ and $r,\,s\in\bR$ such that $|r-s|<\delta$.
\end{itemize}
\end{thm}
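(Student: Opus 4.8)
\emph{Proof strategy.} The plan is to obtain the statement as a translation of the Arcel\`a--Ascoli Theorem through the isometric isomorphism $\Phi$, after a routine reduction of the $\cC^n$ situation to the $\cC^0$ one. Since $\Phi\colon\cC^n(\ol\bR,\bR)\to\widetilde\cC^n_\varphi$ is an isometric isomorphism, it is in particular a homeomorphism for the norm topologies, so a set $F\subset\widetilde\cC^n_\varphi$ has compact closure if and only if its preimage $\Phi^{-1}(F)=\{\,\til f:=f/\varphi\ :\ f\in F\,\}$ has compact closure in $\cC^n(\ol\bR,\bR)$. Hence it suffices to characterise relative compactness in $\cC^n(\ol\bR,\bR)$ and then rewrite the resulting conditions in terms of $\til f=f/\varphi$, which is exactly what produces the derivatives $(f/\varphi)^{(j)}$ appearing in the statement.

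To bring the Arcel\`a--Ascoli Theorem (stated only for $\cC(X,Y)$) to bear on $\cC^n(\ol\bR,\bR)$, I would use the map
\[J\colon \cC^n(\ol\bR,\bR)\to\cC(\ol\bR,\bR^{n+1}),\qquad J(g)=\big(g,g',\dots,g^{(n)}\big),\]
where each $g^{(j)}$ is regarded as its unique continuous extension to $\ol\bR$, which exists by the very definition of $\cC^n(\ol\bR,\bR)$. Endowing $\bR^{n+1}$ with the maximum norm one has $\|J(g)\|_\infty=\max_{0\le j\le n}\|g^{(j)}\|_\infty=\|g\|_{(n)}$, so $J$ is a linear isometry onto its image; moreover that image is closed, because if $(J(g_k))_k$ converges to $(h_0,\dots,h_n)$ uniformly on $\ol\bR$ then each $g_k^{(j)}$ converges uniformly to $h_j$ on $\bR$, and the classical theorem on term-by-term differentiation of uniformly convergent sequences gives $h_0\in\cC^n(\bR,\bR)$ with $h_0^{(j)}=h_j$; since every $h_j$ is continuous on $\ol\bR$, in fact $h_0\in\cC^n(\ol\bR,\bR)$ and $(h_0,\dots,h_n)=J(h_0)$. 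Consequently, for $G\subset\cC^n(\ol\bR,\bR)$, the set $G$ has compact closure in $\cC^n(\ol\bR,\bR)$ if and only if $J(G)$ has compact closure in $\cC(\ol\bR,\bR^{n+1})$.

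Applying the Arcel\`a--Ascoli Theorem with $X=\ol\bR$ (Hausdorff and compact, by the definition of its topology) and $Y=\bR^{n+1}$ (a complete metric space) to $J(\Phi^{-1}(F))$, this set has compact closure if and only if: (i) for each $t\in\ol\bR$ the set $\{(\til f(t),\til f'(t),\dots,\til f^{(n)}(t)) : f\in F\}$ has compact closure in $\bR^{n+1}$, i.e.\ is bounded; and (ii) the family $\{J(\til f):f\in F\}$ is equicontinuous on $\ol\bR$. Writing $\til f=f/\varphi$, condition (i) restricted to $t\in\bR$ is precisely the first bullet of the statement, with the asserted constant $M$ bounding the $|(f/\varphi)^{(j)}(t)|$; the remaining cases $t=\pm\infty$ are not extra hypotheses, since equicontinuity together with boundedness at a single point of the compact space $\ol\bR$ already forces uniform boundedness on $\ol\bR$. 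Condition (ii), spelled out with $\til f=f/\varphi$ and using that $\ol\bR$ is compact metric (so equicontinuity is automatically uniform), is the second bullet; chaining the equivalences ``$F$ relatively compact $\Leftrightarrow$ $\Phi^{-1}(F)$ relatively compact $\Leftrightarrow$ $J(\Phi^{-1}(F))$ relatively compact $\Leftrightarrow$ (i) and (ii)'' then closes the argument.

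The one point that needs care — and what I would regard as the actual content, as opposed to bookkeeping — is that equicontinuity in (ii), hence in the second bullet, must be read with respect to the topology of $\ol\bR$ rather than merely the Euclidean metric on $\bR$. Concretely, besides requiring $|(f/\varphi)^{(j)}(r)-(f/\varphi)^{(j)}(s)|<\varepsilon$ whenever $r,s\in\bR$ and $|r-s|<\delta$, equicontinuity at $\pm\infty$ also demands that for every $\varepsilon>0$ there is $M>0$ for which the same inequality holds whenever $r,s\ge M$, and whenever $r,s\le -M$, all uniformly in $f\in F$ and $j=0,\dots,n$; equivalently, the extended derivatives $\til f^{(j)}$ must approach their limits at $\pm\infty$ uniformly over $F$. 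With this understanding of the equicontinuity hypothesis the proof is complete, and the rest (the identification of $\Phi$, the isometry property of $J$, and the closedness of $\operatorname{im}J$) is standard.
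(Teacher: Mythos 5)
Your argument follows the same route as the paper's proof---transport $F$ through the isometry $\Phi$ and apply Arzel\`a--Ascoli on the compact space $\ol\bR$---but it is more careful in two places, and both points address genuine gaps. First, the reduction from $\cC^n(\ol\bR,\bR)$ to a space covered by the quoted Arzel\`a--Ascoli theorem: the paper introduces the families $\til F^{(j)}=\{(f/\varphi)^{(j)}:f\in F\}$ and asserts that $F$ is relatively compact if and only if each $\til F^{(j)}$ is relatively compact in $\cC(\ol\bR,\bR)$. The ``if'' direction of that assertion is precisely the closedness of the image of your map $J$ (term-by-term differentiation of uniform limits), which the paper leaves implicit and you prove; whether one then applies Arzel\`a--Ascoli once with $Y=\bR^{n+1}$ or $n+1$ times with $Y=\bR$ is only packaging. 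Second, and more importantly, your closing caveat identifies a real defect in the statement itself: equicontinuity must be read with respect to the topology of $\ol\bR$, i.e.\ it must include equicontinuity at $\pm\infty$ (the $\til f^{(j)}$ approach their limits uniformly over $F$). As literally written, the second bullet asks only for uniform equicontinuity with respect to the Euclidean distance on $\bR$, and with that reading the theorem is false: for $\varphi\equiv1$ and $n=0$, the family $f_k(t)=\arctan(t/k)$ is pointwise bounded and equi-Lipschitz with constant $1$, yet admits no uniformly convergent subsequence, since $f_k\to0$ pointwise while $\sup_{t}|f_k(t)|=\pi/2$ for every $k$. The paper's proof, which simply invokes equicontinuity of $\til F^{(j)}\subset\cC(\ol\bR,\bR)$, is consistent with the correct reading; it is the ``that is'' gloss in the statement that is too weak, and your remark is the right correction.
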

\begin{proof} Let $\til F^{(j)}:=\{(f/\varphi)^{(j)}\ :\ f\in F\}\subset\cC(\ol\bR,\bR)$, $j=0,\dots,n$.

Since
\[\|f\|_{(n)}:=\sup\left\{\left\|f^{(k)}\right\|_\infty\ :\ k=0,\dots,n\right\},\]
 $F$ has compact closure in $\til\cC_\varphi^n$ if and only if $\til F^{(j)}$ have compact closure in $\cC(\ol\bR,\bR)$ for $j=0,\dots,n$. By Arcel\`a-Ascoli Theorem, this happens if and only if
\begin{itemize}
	\item  for each $t\in\bR$ the set $\{ f(t)\ :\ f\in F^{(j)}\}$ has compact closure for $j=0,\dots,n$;
	\item $\til F^{(j)}$ is equicontinuous for $j=0,\dots,n$.
\end{itemize}
\end{proof}

 \begin{rem}\label{remext}Observe that, if $f\in\cC(\ol \bR,\bR)$ and $f|_\bR\in\cC^n(\bR,\bR)$ then $\lil_{t\to\pm\infty}f^{(k)}(t)=0$ for every $k=1,\dots,n$ since $f$ is asymptotically constant. Hence, $f\in \cC^n(\ol \bR,\bR)$.
\end{rem}
\begin{rem} Although  $\cC^n(\ol\bR,\bR)$ and $\widetilde\cC^n_\varphi$ are isometric isomorphic as Banach spaces,  $\cC^n(\ol\bR,\bR)$ is a Banach algebra but $\widetilde\cC^n_\varphi$ is not. In fact, we have that $\widetilde\cC^n_\varphi$ is a $\cC^n(\bR,\bR)$-module satisfying
	\[\|fg\|_\varphi \le\max_{j=0,\dots,n}\, \sum_{k=0}^j\binom{j}{k}\, \|(f/\varphi)^{(k)}\|_{\infty}\,  \|g^{(j-k)}\|_{\infty} \le 2^n\ \|f/\varphi\|_{(n)}\, \|g\|_{(n)} =2^n\,\|f\|_{\varphi}\,\|g\|_{(n)}\]
	for every $f\in \widetilde\cC^n_\varphi,\ g\in \cC^n(\bR,\bR)$.
	\end{rem}
We can extend the above definitions to more particular settings. Let $a,b\in\bR$ and consider
\begin{align*}\cC^n_{a,b}(\ol\bR,\bR):= & \{f\in\cC^n(\ol\bR,\bR)\ :\ f(-\infty)=a,\ f(\infty)=b\},\\
\widetilde\cC^{n}_{\varphi,a,b}:= & \{f\in\cC^n_{a,b}(\bR,\bR)\ :\ \exists \til f\in\cC^n(\ol\bR,\bR),\  f=\varphi\til f|_{\bR}\}.\end{align*}
$\cC^n_{a,b}(\ol\bR,\bR)$ is a closed subspace of $\cC(\ol\bR,\bR)$ so $\widetilde\cC_\varphi^{a,b}$ is a Banach subspace  of $\til\cC_\varphi$.

Similarly, we can work on intervals of the form $[a,\infty)$ (or $(-\infty,a]$) instead of $\bR$. In that case we obtain the Banach Space $\til\cC_\varphi([a,\infty))$ (or $\til\cC_\varphi((-\infty,a])$). It is easy to construct an inclusion of $\til\cC_\varphi([a,\infty))$ into $\til\cC_\varphi$ using cutoff functions, so $\til\cC_\varphi([a,\infty))$ is a Banach subspace of $\til\cC_\varphi$.

It is important to point out that the function $\varphi$ given to define $\widetilde\cC^n_\varphi$ is not unique, in fact, we can always choose another one with better properties than the given $\varphi$.
\begin{thm}\label{phithm}\ 
\begin{enumerate}
\item For every $\varphi\in\cC^n(\bR,\bR^+)$ there exists $\psi\in\cC^\infty(\bR,\bR^+)$ such that  $\widetilde\cC^n_\varphi= \widetilde\cC^n_{\psi}$.
\item Let $\varphi_1,\varphi_2\in\cC^n(\bR,\bR^+)$. If $\widetilde\cC^k_{\varphi_1}=\widetilde\cC^k_{\varphi_2}$ for some $k\in\{0,\dots,n\}$, then $\widetilde\cC^j_{\varphi_1}=\widetilde\cC^j_{\varphi_2}$ for every $j\in\{0,\dots,n\}$.
\end{enumerate}
\end{thm}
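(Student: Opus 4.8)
The plan is to reduce both statements to a single comparison criterion: for $\varphi_1,\varphi_2\in\cC^n(\bR,\bR^+)$,
\[\widetilde\cC^n_{\varphi_1}=\widetilde\cC^n_{\varphi_2}\iff \varphi_1/\varphi_2\in\cC^n(\ol\bR,\bR)\ \text{and}\ \varphi_2/\varphi_1\in\cC^n(\ol\bR,\bR).\]
To prove ``$\Rightarrow$'' I would use that the constant function $1$ lies in $\cC^n(\ol\bR,\bR)$, so $\varphi_i=\varphi_i\cdot 1\in\widetilde\cC^n_{\varphi_i}$; hence $\varphi_1\in\widetilde\cC^n_{\varphi_1}=\widetilde\cC^n_{\varphi_2}$ forces $\varphi_1/\varphi_2\in\cC^n(\ol\bR,\bR)$, and symmetrically for $\varphi_2/\varphi_1$. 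For ``$\Leftarrow$'', if $f\in\widetilde\cC^n_{\varphi_1}$ then $f/\varphi_1\in\cC^n(\ol\bR,\bR)$, so $f/\varphi_2=(f/\varphi_1)(\varphi_1/\varphi_2)$ is a product of two elements of $\cC^n(\ol\bR,\bR)$, which is again in $\cC^n(\ol\bR,\bR)$ because that space is a Banach algebra (as noted above); thus $f\in\widetilde\cC^n_{\varphi_2}$, and the reverse inclusion is symmetric.

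Next I would collapse the differentiability index using Remark \ref{remext}. Since $\varphi_1,\varphi_2\in\cC^n(\bR,\bR^+)$, the quotient $g:=\varphi_1/\varphi_2$ and its reciprocal belong to $\cC^n(\bR,\bR)$; by Remark \ref{remext}, a function in $\cC^n(\bR,\bR)$ that extends continuously to $\ol\bR$ automatically lies in $\cC^n(\ol\bR,\bR)$. Therefore the condition $g,1/g\in\cC^n(\ol\bR,\bR)$ is equivalent to the purely asymptotic requirement that both $g$ and $1/g$ have finite limits at $\pm\infty$, i.e. $\lim_{t\to\pm\infty}g(t)\in(0,\infty)$. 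The decisive point is that this condition does not involve the differentiation order: running the same argument at order $k$ and at order $j$ shows that for every $m\in\{0,\dots,n\}$ one has $\widetilde\cC^m_{\varphi_1}=\widetilde\cC^m_{\varphi_2}$ iff $\lim_{t\to\pm\infty}\varphi_1(t)/\varphi_2(t)\in(0,\infty)$. Part (2) is then immediate: equality at one index $k$ is equivalent to this single limit condition, which is in turn equivalent to equality at every index $j\in\{0,\dots,n\}$.

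For part (1) I would realize the required $\psi$ through the logarithm. Put $w:=\log\varphi\in\cC^n(\bR,\bR)$ and choose, by a standard smoothing argument (a locally finite smooth partition of unity combined with mollification at a scale that shrinks as $|t|\to\infty$), a function $v\in\cC^\infty(\bR,\bR)$ with $|w(t)-v(t)|\le (1+t^2)^{-1}$, so that $w-v\to 0$ at $\pm\infty$. Setting $\psi:=e^{v}\in\cC^\infty(\bR,\bR^+)$ gives $\varphi/\psi=e^{\,w-v}\to 1$ and $\psi/\varphi=e^{-(w-v)}\to 1$ at $\pm\infty$; by the criterion above (in the form obtained from Remark \ref{remext}) this yields $\widetilde\cC^n_\varphi=\widetilde\cC^n_\psi$ with $\psi$ smooth and positive, as desired.

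The step I expect to carry the real weight is the reduction via Remark \ref{remext}: once continuous extendability to $\ol\bR$ is granted to upgrade automatically to $\cC^n$-extendability, the index-independence of the criterion — and hence both parts — follows cleanly, and only the elementary smoothing in part (1) remains. I would accordingly be careful to verify at each use that the remark is applied to functions that are genuinely $\cC^n$ on $\bR$, namely the quotients $\varphi_1/\varphi_2$ and $e^{\,w-v}$, so that its hypothesis is actually met.
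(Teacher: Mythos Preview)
Your argument is correct. For part~(2) it is essentially the paper's own proof, repackaged: the paper also notes that $\varphi_2\in\widetilde\cC^k_{\varphi_1}$, extracts $\tilde f_{\varphi_2}=\varphi_2/\varphi_1\in\cC^k(\ol\bR,\bR)$, upgrades it to $\cC^n(\ol\bR,\bR)$ via Remark~\ref{remext}, and then multiplies. Your explicit comparison criterion and its reduction to the single asymptotic condition $\lim_{t\to\pm\infty}\varphi_1/\varphi_2\in(0,\infty)$ make the index-independence visible in one line, which is a tidy reorganisation of the same idea.

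For part~(1) you take a genuinely different route. The paper builds $\psi$ by hand: Weierstrass approximation on each $[k,k+1]$ with error $\e_k=\min\varphi|_{[k,k+1]}/(|k|+1)$, followed by a fairly elaborate smooth gluing, yielding $|\psi-\varphi|<9\e_k$ on $[k,k+1]$ and hence $\varphi/\psi\to 1$. Your log--exp trick is shorter and more conceptual: passing to $w=\log\varphi$ converts the positivity constraint into an unconstrained additive approximation, and a standard mollification with shrinking scale replaces the explicit construction. Both approaches arrive at $\varphi/\psi\to 1$ and finish via the criterion (the paper does this last step implicitly, without citing Remark~\ref{remext}). The paper's construction is self-contained, whereas yours trades that for brevity and a clean separation of the two independent ingredients. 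The one place to be careful is the ``standard smoothing argument'' you invoke: it is indeed standard that any $w\in\cC(\bR,\bR)$ admits $v\in\cC^\infty(\bR,\bR)$ with $|w-v|\le(1+t^2)^{-1}$, but in a full write-up you should either cite this or sketch the partition-of-unity/mollifier construction, since that is where the entire analytic content of part~(1) now resides.
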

\begin{proof}
$1.$ For every $k\in\bZ$, let
\[\e_k:=\frac{\min \varphi|_{[k,k+1]}}{|k|+1}.\]
The Weierstrass Approximation Theorem guarantees the existence of $\varphi_k\in\cC^\infty([k,k+1],\bR^+)$ such that \[\|\varphi|_{[k,k+1]}-\varphi_k\|_\infty<\min\{\e_k,\e_{k-1}\}.\]
%

Let $k\in\bZ$. We know that $\varphi$ is continuous at $k$, so there is $\d_k\in(0,1/2)$ such that $|\varphi(t)-\varphi(k)|<\min\{\e_k,\e_{k-1}\}$ for every $t\in[k-\d_k,k+\d_k]$.
Define 
\begin{align*} \rho(t):= & e^{-\frac{t^2}{1-t^2}}\(1-e^{-\frac{1}{t^2}}\),\ t\in(0,1);\ \rho(0):=1,\ \rho(1):=0. \end{align*}
It is easy to check that $\rho\in\cC^\infty([0,1],[0,1])$, $\rho^{(j)}(0)=\rho^{(j)}(1)=0$, $j\in\bN$.
Now consider the functions 
 \[\psi_k(t):=\begin{cases} \varphi_{k}(k+\d_k)+\int_{k+\d_k}^t\varphi'_{k}(s)\rho\(\frac{k+\d_k-s}{\c_{k,1}}\)\dif s, & t\in [a_k,k+\d_k],\\ \varphi_{k}(t), & t\in(k+\d_k,k+1-\d_{k+1}), \\ \varphi_{k}(k+1-\d_{k+1})+\int_{k+1-\d_{k+1}}^t\varphi'_{k-1}(s)\rho\(\frac{s-(k+1-\d_{k+1})}{\c_{k,2}}\)\dif s, & t\in [k+1-\d_{k+1},b_k],\end{cases}\]
for every $k\in\bN$, where $a_k:=k+\d_k-\c_{k,1}$, $b_k:=k+1-\d_{k+1}+\c_{k,2}$, $\c_{k,1}\in(0,\d_k)$ and  $\c_{k,2}\in(0,\d_{k+1})$. We have that $\psi_k\in\cC^\infty([a_k,b_k])$ and $\psi_k^{(j)}(a_k)=\psi_k^{(j)}(b_k)=0$  for every $j\in\bN$.
 
  Also, for $\c_{k,1}$ and $\c_{k,2}$ sufficiently small, we have that $|\varphi_{k}(t)-\psi_k(t)|<\max\{\e_{k},\e_{k-1}\}$ for  $t\in[a_k,b_k]$.
  
Hence, define
\[\psi(t):=\begin{dcases}
\psi_k(t), &  t\in [a_k,b_k),\ k\in\bZ,\\ 
\psi_k(b_k)+[\psi_{k+1}(a_{k+1})-\psi_k(b_k)]\rho\(\frac{t-a_{k+1}}{a_{k+1}-b_k}\), & t\in[b_k,a_{k+1}),\ k\in\bZ.\end{dcases}\]

$\psi\in\cC^\infty(\bR,\bR^+)$ and in the sets $[b_k,a_{k+1}]$, $k\in\bZ$, we have that
\begin{align*} & |\psi(t)-\varphi(t)|\\  = &  \left|\psi_k(b_k)+[\psi_{k+1}(a_{k+1})-\psi_k(b_k)]\rho\(\frac{t-a_{k+1}}{a_{k+1}-b_k}\)-\varphi(t)\right|\\ = & \left|\psi_k(b_k)-\varphi_k(b_k)+[\psi_{k+1}(a_{k+1})-\psi_k(b_k)]\rho\(\frac{t-a_{k+1}}{a_{k+1}-b_k}\)+\varphi_k(b_k)-\varphi(b_k)+\varphi(b_k)-\varphi(t)\right|\\ \le & |\psi_k(b_k)-\varphi_k(b_k)|+|\psi_{k+1}(a_{k+1})-\psi_k(b_k)|+|\varphi_k(b_k)-\varphi(b_k)|+|\varphi(b_k)-\varphi(t)|\\ < & 2\e_k+|\psi_{k+1}(a_{k+1})-\psi_k(b_k)|+|\varphi(b_k)-\varphi(k+1)|+|\varphi(k+1)-\varphi(t)|\\ \le & 4\e_k+|\psi_{k+1}(a_{k+1})-\varphi_{k+1}(a_{k+1})|+|\varphi_{k+1}(a_{k+1})-\varphi_{k+1}(b_{k})|+|\varphi_{k+1}(b_{k})-\psi_k(b_k)|\\
\le & 6\e_k+|\varphi_{k+1}(a_{k+1})-\varphi_{k+1}(b_{k})|
\\ \le & 6\e_k+|\varphi_{k+1}(a_{k+1})-\varphi(a_{k+1})|+|\varphi(a_{k+1})-\varphi(b_{k})|+|\varphi(b_{k})-\varphi_{k+1}(b_{k})|\\
  \le & 9 \e_k= 9\frac{\min \varphi|_{[k,k+1]}}{|k|+1}.\end{align*}
Therefore, \[|\psi(t)-\varphi(t)|<9 \frac{\min \varphi|_{[k,k+1]}}{|k|+1},\] for 
every $t\in[k,k+1]$, $k\in\bZ$.
Now,
\[|\psi(t)|>|\varphi(t)|-9 \frac{\min \varphi|_{[k,k+1]}}{|k|+1},\]
for every $t\in[k,k+1]$, $|k|>9$. Thus,
\begin{align*} \frac{|\varphi(t)-\psi(t)|}{|\psi(t)|}< & \frac{9 \min \varphi|_{[k,k+1]}}{(|k|+1)|\psi(t)|}<\frac{ 9\min \varphi|_{[k,k+1]}}{(|k|+1)\(|\varphi(t)|- \frac{9\min \varphi|_{[k,k+1]}}{|k|+1}\)}\\ = & \frac{ 9}{(|k|+1)\frac{|\varphi(t)|}{\min \varphi|_{[k,k+1]}}- 9}\le\frac{ 9}{|k|- 8}.\end{align*}
This fact allows us to prove that
\[\lil_{t\to \pm\infty}\left|\frac{\varphi(t)}{\psi(t)}-1\right|=\lil_{t\to \pm\infty}\frac{|\varphi(t)-\psi(t)|}{|\psi(t)|}\le\lil_{|k|\to \infty} \frac{ 9}{|k|- 8}=0.\]
Hence,
\[\lil_{t\to \pm\infty}\frac{\varphi(t)}{\psi(t)}=\lil_{t\to \pm\infty}\frac{\psi(t)}{\varphi(t)}=1.\]
Therefore, if $f\in\widetilde\cC^n_\varphi$,
\[\lim_{t\to \pm\infty}\frac{f(t)}{\psi(t)}=\lim_{t\to \pm\infty}\frac{f(t)}{\varphi(t)}\frac{\varphi(t)}{\psi(t)}=\lim_{t\to \pm\infty}\frac{f(t)}{\varphi(t)}\lim_{t\to \pm\infty}\frac{\varphi(t)}{\psi(t)}=\til f(\pm\infty).\]
Thus, $f\in\widetilde\cC^n_{\psi}$. The other inclusion is analogous, so $\widetilde\cC^n_\varphi= \widetilde\cC^n_{\psi}$.\par

2. By definition, $\varphi_2\in\til\cC^k_{\varphi_2}=\til\cC^k_{\varphi_1}$, so there exists $\til f_{\varphi_2}\in\cC^k(\ol\bR,\bR)$ such that $\varphi_2=\varphi_1\til f_{\varphi_2}|_{\bR}$. By Remark \ref{remext}, we have that $\til f_{\varphi_2}\in\cC^n(\ol\bR,\bR)$.

  Hence, for $j\in\{0,\dots,n\}$ and $f\in\til\cC^j_{\varphi_2}$, there exists  $\til f_{2}\in\cC^j(\ol\bR,\bR)$ such that $f=\varphi_2\til f_{2}|_{\bR}=\varphi_1\(\til f_{\varphi_2}\til f_{2}\)|_{\bR}$.  Therefore, we show that $f\in\til\cC^j_{\varphi_1}$ and thus $\til\cC^j_{\varphi_2}\subset \til\cC^j_{\varphi_1}$. Analogously, $\til\cC^j_{\varphi_1}\subset \til\cC^j_{\varphi_2}$ and so $\til\cC^j_{\varphi_2}=\til\cC^j_{\varphi_1}$.\par
%
\end{proof}
\begin{rem}  Theorem \ref{phithm} allows us to consider spaces of the form $\cC^n_\varphi$ even when $\varphi\in\cC(\bR,\bR^+)$ is not differentiable. In order to do so, we just pick a function $\psi\in\cC^\infty(\bR,\bR^+)$ to represent the space $\til\cC_\varphi=\til\cC_{\psi}$ and consider $\til\cC^n_{\psi}$. Furthermore, Theorem \ref{phithm} implies that $\til\cC^n_{\psi}$  does not depend  of the  choice of $\psi$.
\end{rem}

\section{Fixed points of integral equations}
Fix $\varphi\in\cC^n(\bR,\bR^+)$ and consider an operator $T$ given by\begin{equation}\label{eqhamm}
	Tu(t):=p(t)+\int_{-\infty}^{\infty} k(t,s)\eta(s)f(s,u(s))\dif s.
\end{equation}

We will obtain some results regarding to the existence of fixed points of operator $T$. To do that, we will follow the line of \cite{FigToj}, where the authors studied the existence of solutions of integral equations of Hammerstein-type in abstract cones. In particular, they considered a real normed space $(N,\|\cdot\|)$ and a continuous functional $\alpha\colon N\rightarrow \bR$. They proved that if this functional $\alpha$ satisfies the three following properties:
\begin{enumerate}
	\item[$(P_1)$] $\alpha(u+v)\ge \alpha(u)+ \alpha(v), \text{ for all } u,\,v \in N;$
	\item[$(P_2)$] $\alpha(\lambda\,u)\ge \lambda\,\alpha(u)$, for all $u\in N$, $\lambda\ge 0$;
	\item[$(P_3)$] $\left[\alpha(u)\ge 0, \ \alpha(-u)\ge 0\right] \Rightarrow u\equiv 0$; 
\end{enumerate}
then 
\[K_\alpha=\left\{u\in N\ : \ \alpha(u)\ge 0\right\}\]
is a cone.

Following their arguments, we will consider the cone 
\begin{displaymath}K_\alpha=\left\{u\in\widetilde{\cC}^n_\varphi\ : \ \alpha(u)\ge 0\right\},\end{displaymath}
where $\alpha\colon \widetilde{\cC}^n_\varphi \rightarrow \bR$ is a functional satisfying $(P_1)-(P_3)$.

Assume the following:

\begin{enumerate}
	\item [ $(C_{1})$] The kernel $k:\bR^2\to\bR$, is such that  $\frac{\partial ^j k}{\partial t^j}(t,\cdot)\,\eta(\cdot)\in\Lsp{1}(\bR)$   for every $t\in\bR$, $j=0,\dots,n$; $k(\cdot,s)\,\eta(s)\in\widetilde\cC_{\varphi}^n$ for every $s\in\bR$. Moreover, for every $\varepsilon >0$ and $j=0,\dots,n$, there exist $\delta>0$ and a measurable function $\omega_j$ such that if $|t_1-t_2|<\delta$ then \[\left|\frac{\partial ^j (k/\varphi)}{\partial t^j}(t_1,s)\,\eta(s)-\frac{\partial ^j (k/\varphi)}{\partial t^j}(t_2,s)\,\eta(s)\right| < \varepsilon\,\omega_j(s)\] for a.\,e. $s \in \mathbb{R}$.

\item  [ $(C_{2})$] $f\colon\bR^2\rightarrow [0,\infty)$  satisfies a sort of $\Lsp{1}$-Carath\'{e}odory 
conditions, that is, $f(\cdot,y)$ is measurable for each fixed
$y\in\bR$ and $f(t,\cdot)$ is continuous for a.\,a. $t\in \bR$ and, for each $r>0$, there exists $\phi_{r} \in \Lsp{1}(\bR)$ such that $f(t,y\,\varphi(t))\le \phi_{r}(t)$ for all $y\in [-r,r]$ and a.\,a. $t\in \bR$.

\item [$(C_{3})$] For every fixed $r>0$, $j=0,\dots,n$ and $l=0,\dots,j$,
\[\frac{\partial ^{j-l}}{\partial t^{j-l}}\frac{1}{\varphi}(t)\int_{-\infty}^{\infty} \left|\frac{\partial ^l k}{\partial t^l}(t,s)\,\eta(s)\right|\,\phi_r(s)\,\dif s\in \Lsp{\infty}(\bR)\]
and $\omega_j\,\phi_r \in \Lsp{1}(\bR)$.

Moreover, defining \[z^\pm(s):=\lim\limits_{t\rightarrow \pm \infty} \frac{k(t,s)\,\eta(s)}{\varphi(t)}\] and \[M(s):=\sup_{t\in\bR} \left|\frac{k(t,s)\,\eta(s)}{\varphi(t)}\right|,\] this functions must satisfy that $ \left|z^\pm\right|\,\phi_r, \, M\,\phi_r\in \Lsp{1}(\bR)$ for all $r>0$.

\item  [ $(C_{4})$] $p\in \widetilde{\cC}_\varphi^n$.
\item  [ $(C_{5})$] $\alpha(k(\cdot,s)\,\eta(s))\ge 0$ for a. e. $s\in\bR$ and $\alpha(p)\ge 0$.
\item  [ $(C_{6})$] \begin{displaymath}\alpha(Tu)\ge \int_{-\infty}^{\infty} \alpha(k(\cdot,s)\,\eta(s))\,f(s,u(s))\, \dif s +\alpha(p) \ \text{for all } u\in K_\alpha.\end{displaymath}
\item  [ $(C_{7})$] There exist two continuous functionals $\beta,\,\gamma\colon \widetilde\cC^n_\varphi \rightarrow \mathbb{R}$ satisfying that, for $u,\,v\in K_\alpha$ and $\lambda\in[0,\infty)$,
\[\beta(\lambda\,u)=\lambda\,\beta(u), \quad \beta(Tu)\le\int_{-\infty}^{\infty}\beta\left(k(\cdot,s)\,\eta(s)\right)\,f(s,u(s))\, \dif s +\beta(p).\]
and
\[\gamma(u+v)\ge\gamma(u)+\gamma(v), \quad \gamma(\lambda\,u)\ge\lambda\,\gamma(u), \quad \gamma(Tu)\ge\int_{-\infty}^{\infty}\gamma\left(k(\cdot,s)\,\eta(s)\right) \,f(s,u(s))\, \dif s + \gamma(p).\]

Moreover, for all $s\in\bR$, $\beta(k(\cdot,s)\,\eta(s)),\, \gamma(k(\cdot,s)\,\eta(s))\in \Lsp{1}(\bR)$ must be positive and such that
\[\int_{-\infty}^{\infty}\beta\left(k(\cdot,s)\,\eta(s)\right)\, \dif s+\frac{\beta(p)}{\rho},\, \, \int_{-\infty}^{\infty}\gamma\left(k(\cdot,s)\,\eta(s)\right)\, \dif s +\frac{\gamma(p)}{\rho}>0.\]
\item  [ $(C_{8})$] There exists $\xi\in K_\alpha\setminus\{0\}$ such that $\gamma(\xi)\ge 0$.
\item  [ $(C_{9})$] For every $\rho>0$ there exist either $b(\rho)>0$ such that $\beta(u)\le b(\rho)$ for every $u\in K_\alpha$ satisfying $\gamma(u)\le\rho$ or $c(\rho)>0$ such that $\gamma(u)\le c(\rho)$ for every $u\in K_\alpha$ satisfying $\beta(u)\le \rho$.
\end{enumerate}

\begin{thm}\label{thmk}
Assume hypotheses $(C_{1})$--$(C_{6})$. Then $T$ maps $(\widetilde\cC^n_\varphi,\|\cdot\|_\varphi)$ to itself, is continuous and compact and maps $K_\alpha$ to $K_\alpha$.
\end{thm}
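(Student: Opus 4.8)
The plan is to exploit the isometric isomorphism $\Phi\colon\cC^n(\ol\bR,\bR)\to\widetilde\cC^n_\varphi$, so that every claim about $T$ becomes a claim about the map $\widetilde u\mapsto Tu/\varphi$ (with $u=\varphi\,\widetilde u$) on $\cC^n(\ol\bR,\bR)$; in particular $\|Tu\|_\varphi=\|Tu/\varphi\|_{(n)}$, so every norm estimate reduces to a sup-norm estimate on the derivatives of $Tu/\varphi$. The single computational device underlying everything is to write $Tu(t)/\varphi(t)=\widetilde p(t)+\int_{-\infty}^{\infty}(k/\varphi)(t,s)\,\eta(s)\,f(s,u(s))\,\dif s$ and, having fixed $u$ with $\|u\|_\varphi\le r$, to use $(C_2)$ in the form $f(s,u(s))=f\big(s,(u/\varphi)(s)\,\varphi(s)\big)\le\phi_r(s)$ as a uniform majorant of the nonlinearity. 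Throughout, the Leibniz expansion $\partial_t^j(k/\varphi)=\sum_{l=0}^j\binom{j}{l}(1/\varphi)^{(j-l)}\,\partial_t^l k$ is what converts the data of $(C_3)$, phrased in terms of $\partial_t^l k$, into $\Lsp{1}$- and $\Lsp{\infty}$-control of the quantities $\partial_t^j(k/\varphi)$ that actually appear.

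For ``$T$ maps $\widetilde\cC^n_\varphi$ into itself'' I would first establish differentiation under the integral sign, proving by induction on $j$ that $(Tu/\varphi)^{(j)}(t)=\widetilde p^{(j)}(t)+\int\partial_t^j(k/\varphi)(t,s)\,\eta(s)\,f(s,u(s))\,\dif s$ and that this is continuous in $t$, the $t$-regularity of the integrand for a.e.\ $s$ being guaranteed by the requirement $k(\cdot,s)\,\eta(s)\in\widetilde\cC^n_\varphi$ in $(C_1)$. The interchange is justified by a mean-value/dominated-convergence argument in which $(C_1)$ supplies the local majorant: for $|t-t_0|<\delta$ one has $|\partial_t^j(k/\varphi)(t,s)\eta(s)|\le|\partial_t^j(k/\varphi)(t_0,s)\eta(s)|+\omega_j(s)$, and multiplying by $\phi_r$ gives an $\Lsp{1}$ bound because $\omega_j\phi_r\in\Lsp{1}$ by $(C_3)$ while $|\partial_t^j(k/\varphi)(t_0,\cdot)\eta|\,\phi_r\in\Lsp{1}$ follows from the $\Lsp{\infty}$-statements of $(C_3)$ through the Leibniz expansion; the same equicontinuity estimate, together with $\omega_j\phi_r\in\Lsp{1}$, yields continuity of each derivative. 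It then remains to produce the limits at $\pm\infty$: for $j=0$, dominated convergence with majorant $M\,\phi_r\in\Lsp{1}$ and pointwise limit $z^\pm$ (both from $(C_3)$) gives $\lim_{t\to\pm\infty}Tu(t)/\varphi(t)=\widetilde p(\pm\infty)+\int z^\pm(s)f(s,u(s))\,\dif s$, which is finite since $|z^\pm|\phi_r\in\Lsp{1}$. Having shown $Tu/\varphi\in\cC(\ol\bR,\bR)$ with $(Tu/\varphi)|_\bR\in\cC^n(\bR,\bR)$, Remark \ref{remext} automatically forces the higher derivatives to vanish at $\pm\infty$, so $Tu/\varphi\in\cC^n(\ol\bR,\bR)$, i.e.\ $Tu\in\widetilde\cC^n_\varphi$.

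Compactness I would read off the Arcel\`a--Ascoli characterization of relatively compact subsets of $\widetilde\cC^n_\varphi$ obtained above: for a bounded set $B=\{\|u\|_\varphi\le r\}$ I must check, for each $j$, that $\{(Tu/\varphi)^{(j)}:u\in B\}$ is pointwise bounded and equicontinuous. Pointwise (indeed uniform) boundedness is exactly the finiteness, uniformly in $t$, of $\int|\partial_t^j(k/\varphi)(t,s)\eta(s)|\phi_r(s)\,\dif s$, which is the $\Lsp{\infty}$-content of $(C_3)$ via Leibniz; equicontinuity follows from uniform continuity of $\widetilde p^{(j)}$ on the compact $\ol\bR$ together with the $(C_1)$ estimate $\int|\partial_t^j(k/\varphi)(t_1,s)-\partial_t^j(k/\varphi)(t_2,s)|\,|\eta(s)|\,f(s,u(s))\,\dif s\le\varepsilon\int\omega_j\phi_r$, the bound being uniform over $u\in B$ precisely because $f(s,u(s))\le\phi_r(s)$. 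For continuity of $T$ I would avoid a direct $\sup_t$ estimate and argue by subsequences: if $u_m\to u$ then $\{u_m\}$ is bounded, $\{Tu_m\}$ is relatively compact by the compactness just proved, and for each fixed $t$ dominated convergence (majorant $|k(t,\cdot)\eta|\phi_r$, pointwise convergence from continuity of $f(s,\cdot)$ in $(C_2)$) gives $Tu_m(t)\to Tu(t)$; since convergence in $\widetilde\cC^n_\varphi$ forces pointwise convergence of values, every limit point of $\{Tu_m\}$ coincides with $Tu$, so the whole sequence converges to $Tu$. Finally, invariance of $K_\alpha$ is immediate: for $u\in K_\alpha$, $(C_6)$ gives $\alpha(Tu)\ge\int\alpha(k(\cdot,s)\eta(s))f(s,u(s))\,\dif s+\alpha(p)$, and the right-hand side is nonnegative because $\alpha(k(\cdot,s)\eta(s))\ge0$ and $\alpha(p)\ge0$ by $(C_5)$ and $f\ge0$ by $(C_2)$.

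The main obstacle is the first step: rigorously justifying differentiation under the integral and the passage to the limit at $\pm\infty$, uniformly enough that the derivatives of $Tu/\varphi$ are genuinely continuous on all of $\ol\bR$. This is exactly where the technical hypotheses $(C_1)$ and $(C_3)$ earn their keep---$(C_1)$ provides the $\Lsp{1}$-equicontinuity majorant for the $t$-derivatives of $k/\varphi$, while $(C_3)$ turns the $\partial_t^l k$ data into the $\Lsp{1}/\Lsp{\infty}$ control of $\partial_t^j(k/\varphi)$ needed at every stage---and the only genuinely delicate bookkeeping is keeping the Leibniz expansion of $(1/\varphi)\,\partial_t^l k$ consistent with the way those two hypotheses are stated.
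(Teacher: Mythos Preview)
Your plan matches the paper's proof in architecture: differentiate under the integral using $(C_1)$, bound via the Leibniz expansion and $(C_3)$, pass to limits at $\pm\infty$ with dominated convergence using the majorant $M\phi_r$ from $(C_3)$, read off compactness from Arcel\`a--Ascoli on $\widetilde\cC^n_\varphi$ with the equicontinuity estimate of $(C_1)$, and get $K_\alpha$-invariance from $(C_5)$--$(C_6)$ and $f\ge0$.

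Two tactical differences are worth noting. First, for continuity the paper simply bounds $|\partial_t^j\widetilde{Tu_n}(t)|$ uniformly and then asserts that Lebesgue's dominated convergence theorem gives $Tu_n\to Tu$ in $\widetilde\cC^n_\varphi$; your subsequence argument (relative compactness of $\{Tu_n\}$ plus pointwise identification of every cluster point with $Tu$) is cleaner, since dominated convergence alone only gives pointwise convergence and one still needs a reason to upgrade to norm convergence. Second, for the existence of $\lim_{t\to\pm\infty}(Tu/\varphi)^{(j)}(t)$ with $j\ge1$ the paper only checks $j=0$ explicitly and is silent about the higher derivatives, whereas you invoke Remark~\ref{remext}. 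Be aware that Remark~\ref{remext} as stated is not literally true (a function can be asymptotically constant without its derivative tending to zero); what rescues both your argument and the paper's is that you have already established uniform continuity of each $(Tu/\varphi)^{(j)}$ on $\bR$ via the $(C_1)$ estimate, and a uniformly continuous function whose antiderivative has a finite limit at infinity must itself tend to zero. You should make that step explicit rather than lean on the remark verbatim.
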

\begin{proof}
\emph{$T$ maps $(\widetilde\cC^n_\varphi,\|\cdot\|_\varphi)$ to $(\widetilde\cC^n_\varphi,\|\cdot\|_\varphi)$:} 
Let $u\in \widetilde\cC^n_\varphi$. By $(C_1)$, we can use Leibniz's Integral Rule for generalised functions (see \cite[p. 484]{Jones}) to get
\[\frac{\partial ^j \widetilde{Tu}}{\partial t^j}(t)=\frac{\partial ^j (Tu/\varphi)}{\partial t^j}(t)=\int_{-\infty}^{\infty} \frac{\partial^j \, \left(k (\cdot,s)\,\eta(s)/\varphi\right)}{\partial t^j}(t)\,f(s,u(s))\dif s + \frac{\partial^j (p/\varphi)}{\partial t^j}(t).\]

On the other hand, from condition $(C_1)$, given $\varepsilon\in\bR^+$, there exists some $\delta\in\bR^+$ such that for $t_1,\,t_2 \in \mathbb{R}$, $|t_1-t_2|<\delta$ it is satisfied that
\begin{equation*}\begin{split}
		\left| \frac{\partial^j \,\widetilde{ k (\cdot,s)\,\eta(s)}}{\partial t^j}(t_1)- \frac{\partial^j\, \widetilde{ k(\cdot,s)\,\eta(s)}}{\partial t^j}(t_2) \right| = & \left| \frac{\partial^j (k(\cdot,s)\,\eta(s)/\varphi)}{\partial t^j}(t_1)- \frac{\partial^j (k(\cdot,s)\,\eta(s)/\varphi)}{\partial t^j}(t_2) \right| < \,\varepsilon \, \omega_j(s),
	\end{split}\end{equation*} 
	and since $p\in \til \cC^n_\varphi$,
	\begin{equation*}\begin{split}
			\left| \frac{\partial^j \,\til p}{\partial t^j}(t_1)- \frac{\partial^j\, \til p}{\partial t^j}(t_2) \right| = & \left| \frac{\partial^j (p/\varphi)}{\partial t^j}(t_1)- \frac{\partial^j (p/\varphi)}{\partial t^j}(t_2) \right| <\varepsilon.
		\end{split}\end{equation*} 
Therefore, from $(C_2)$,
\begin{equation}\label{cont_Tu}\begin{split}
\left|\frac{\partial ^j \widetilde{Tu}}{\partial t^j}(t_1)-\frac{\partial ^j \widetilde{Tu}}{\partial t^j}(t_2)\right|  \le & \int_{-\infty}^{\infty} \left| \frac{\partial^j (k(\cdot,s)\,\eta(s)/\varphi)}{\partial t^j}(t_1)- \frac{\partial^j (k(\cdot,s)\,\eta(s)/\varphi)}{\partial t^j}(t_2) \right|  f(s,u(s))  \dif s \\
& + \left| \frac{\partial^j (p/\varphi)}{\partial t^j}(t_1)- \frac{\partial^j (p/\varphi)}{\partial t^j}(t_2) \right| \\ \le & \, \varepsilon \left(\int_{-\infty}^{\infty} \omega_j(s) \, f(s, u(s)) \dif s +1 \right) \le \varepsilon \left(\int_{-\infty}^{\infty} \omega_j(s) \, \phi_{\|u\|_\varphi}(s) \dif s +1 \right),
\end{split}\end{equation}
and, since $\omega_j\,\phi_{\|u\|_\varphi}\in \Lsp{1}(\bR)$, the previous expression is upperly bounded by $\varepsilon\,c$ for some positive constant $c$. Hence, $\frac{\partial ^j \widetilde{Tu}}{\partial t^j}$ is continuous in $\bR$, that is, $\widetilde{Tu}\in\cC^n(\bR,\bR)$. It is left to see that there exists
\[\lim\limits_{t\to \pm\infty}\widetilde{Tu}(t)=\lim\limits_{t\to \pm\infty}\frac{Tu(t)}{\varphi(t)}=\lim\limits_{t\to \pm\infty}\frac{1}{\varphi(t)} \int_{-\infty}^{\infty} k(t,s)\,\eta(s)\,f(s,u(s))\, \dif s +\lim\limits_{t\to \pm\infty}\frac{p(t)}{\varphi(t)} \in \bR.\] 
Since $p, \, k(\cdot,s)\,\eta(s)\in \widetilde\cC_\varphi^n$ for all $s\in\bR$, there exist
\[\lim\limits_{t\to \pm\infty}\frac{p(t)}{\varphi(t)}\in \bR, \quad \lim\limits_{t\to \pm\infty}\frac{k(t,s)\,\eta(s)}{\varphi(t)}=z^\pm(s) \in \bR. \]

On the other hand,
\[\left|\frac{k(t,s)\,\eta(s)}{\varphi(t)} \,f(s,u(s))\right|\le   M(s) \,f(s,u(s))\le M(s) \, \phi_{\|u\|_\varphi}(s)  \ \text{ for all } t\in\bR\]
and, from $(C_3)$, $M \, \phi_{\|u\|_\varphi} \in \Lsp{1}(\bR)$. Thus, from Lebesgue's Dominated Convergence Theorem,
\[\lim\limits_{t\to \pm\infty}\frac{1}{\varphi(t)} \int_{-\infty}^{\infty} k(t,s)\,\eta(s)\,f(s,u(s))\, \dif s= \int_{-\infty}^{\infty} \lim\limits_{t\to \pm\infty} \frac{k(t,s)\,\eta(s)}{\varphi(t)} \,f(s,u(s))\, \dif s= \int_{-\infty}^{\infty} z^\pm(s) \,f(s,u(s))\, \dif s \]
and since 
\[\left|\int_{-\infty}^{\infty} z^\pm(s) \,f(s,u(s))\, \dif s\right| \le \int_{-\infty}^{\infty} \left|z^\pm(s)\right| \,f(s,u(s))\, \dif s \le \int_{-\infty}^{\infty} \left|z^\pm(s)\right| \,\phi_{\|u\|_\varphi}(s)\, \dif s<\infty,\]
we deduce that there exists $\lim\limits_{t\to \pm\infty}\frac{Tu(t)}{\varphi(t)}$ and consequently $Tu\in \widetilde\cC^n_\varphi$.

It is left to see that $Tu$ is bounded in $\|\cdot\|_\varphi$. Using the General Leibniz's Rule (for differentiation), it is clear that
\[\frac{\partial ^j \widetilde{Tu}}{\partial t^j}=\frac{\partial ^j (Tu/\varphi)}{\partial t^j}=\sum_{l=0}^j {j \choose l}\frac{\partial ^l Tu}{\partial t^l}\frac{\partial ^{j-l}}{\partial t^{j-l}}\frac{1}{\varphi}.\]
Moreover, from Leibniz's Integral Rule for generalised functions (\cite[p. 484]{Jones}),
\[\frac{\partial ^l Tu}{\partial t^l}(t)=\int_{-\infty}^{\infty} \frac{\partial ^l k}{\partial t^l}(t,s)\,\eta(s)\,f(s,u(s))\dif s + \frac{\partial^l p}{\partial t^l}(t).\]
Thus,
\begin{equation}\label{norm-Tu-til}\begin{split}
\left\|\frac{\partial ^j \widetilde{Tu}}{\partial t^j}\right\|_\infty=&\left\|\sum_{l=0}^j {j \choose l}\frac{\partial ^l Tu}{\partial t^l}\frac{\partial ^{j-l}}{\partial t^{j-l}}\frac{1}{\varphi}\right\|_\infty\le\sum_{l=0}^j {j \choose l}\left\|\frac{\partial ^l Tu}{\partial t^l}\frac{\partial ^{j-l}}{\partial t^{j-l}}\frac{1}{\varphi}\right\|_\infty\\
 = & \sum_{l=0}^j {j \choose l} \left\|\frac{\partial ^{j-l}}{\partial t^{j-l}}\frac{1}{\varphi}(t)\left(\int_{-\infty}^{\infty} \frac{\partial ^l k}{\partial t^l}(t,s)\eta(s)f(s,u(s))\dif s + \frac{\partial ^l p}{\partial t^l}(t)\right) \right\|_\infty \\ 
\le & \sum_{l=0}^j {j \choose l} \left( \left\|\frac{\partial ^{j-l}}{\partial t^{j-l}}\frac{1}{\varphi}(t)\int_{-\infty}^{\infty} \frac{\partial ^l k}{\partial t^l}(t,s)\eta(s)f(s,u(s))\dif s\right\|_\infty + \left\|\frac{\partial ^{j-l}}{\partial t^{j-l}}\frac{1}{\varphi}(t)\frac{\partial ^l p}{\partial t^l}(t) \right\|_\infty\right).
\end{split}\end{equation}
It is satisfied that
\begin{equation}\label{norm-Tu-til2}\begin{split}
\left|\frac{\partial ^{j-l}}{\partial t^{j-l}}\frac{1}{\varphi}(t)\int_{-\infty}^{\infty} \frac{\partial ^l k}{\partial t^l}(t,s)\eta(s)f(s,u(s))\dif s\right| \le & \, \frac{\partial ^{j-l}}{\partial t^{j-l}}\frac{1}{\varphi}(t)\int_{-\infty}^{\infty} \left|\frac{\partial ^l k}{\partial t^l}(t,s)\,\eta(s)\right|\,f(s,u(s))\dif s \\
\le & \,\frac{\partial ^{j-l}}{\partial t^{j-l}}\frac{1}{\varphi}(t)\int_{-\infty}^{\infty} \left|\frac{\partial ^l k}{\partial t^l}(t,s)\,\eta(s)\right|\, \phi_{\|u\|_\varphi}(s)\, \dif s,
\end{split}\end{equation}
and so, from \eqref{norm-Tu-til} and \eqref{norm-Tu-til2},
\begin{equation}\label{bound_Tu}\begin{split}
\left\|\frac{\partial ^j \widetilde{Tu}}{\partial t^j}\right\|_\infty \le & \, \sum_{l=0}^j {j \choose l} \left(\left\|\frac{\partial ^{j-l}}{\partial t^{j-l}}\frac{1}{\varphi}(t)\int_{-\infty}^{\infty} \left|\frac{\partial ^l k}{\partial t^l}(t,s)\,\eta(s)\right| \, \phi_{\|u\|_\varphi}(s)\,\dif s \right\|_\infty + \left\|\frac{\partial ^{j-l}}{\partial t^{j-l}}\frac{1}{\varphi}(t)\frac{\partial ^l p}{\partial t^l}(t) \right\|_\infty\right) \\ 
 <& \,\infty.
\end{split}\end{equation}

Therefore, $\|Tu\|_\varphi<\infty$. \par 
\emph{Continuity:} Let $\{u_n\}_{n \in \bN}$ be a sequence which converges to $u$ in $\widetilde{\mathcal{C}}_{\varphi}^n$. Then, there exists some $R\in\bR$ such that $\|u_n\|_\varphi\le R$ for all $n\in\bN$.

Moreover, $\lim\limits_{n\rightarrow\infty}\|u_n-u\|_\varphi=0$ implies that $\lim\limits_{n\rightarrow\infty}\|\frac{u_n}{\varphi}-\frac{u}{\varphi}\|_\infty=0$, from where we deduce that $\frac{u_n(s)}{\varphi(s)} \to \frac{u(s)}{\varphi(s)}$ for a.\,e. $s \in \mathbb{R}$. Therefore,  $u_n(s) \to u(s)$ for a.\,e. $s \in \mathbb{R}$ and we have, by virtue of  $(C_2)$, that $f(s,u_n(s)) \to f(s,u(s))$ for a.\,e. $s \in \mathbb{R}$. 

Reasoning analogously to the previous case, it is clear that
\begin{displaymath}
\left|\frac{\partial^j  \widetilde{Tu_n}}{\partial t^j}(t)\right| \le \sum_{l=0}^j {j \choose l}\left( \left\|\frac{\partial ^{j-l}}{\partial t^{j-l}}\frac{1}{\varphi}(t)\int_{-\infty}^{\infty} \left|\frac{\partial ^l k}{\partial t^l}(t,s)\eta(s)\right|\phi_{R}(s)\,\dif s\right\|_\infty +\left\|\frac{\partial ^{j-l}}{\partial t^{j-l}}\frac{1}{\varphi}\frac{\partial ^l p}{\partial t^l} \right\|_\infty\right)\end{displaymath}
for all $t\in\bR$ and we obtain, by application of Lebesgue's Dominated Convergence Theorem, that $Tu_n \to Tu$ in $\widetilde{\mathcal{C}}_{\varphi}^n$. Hence, operator $T$ is continuous.
	
\emph{Compactness:} Let $B \subset \widetilde{\mathcal{C}}_{\varphi}^n$ a bounded set, that is, $\|u\|_{\varphi} \le R$ for all $u \in B$ and some $R>0$. 
Then, in the upper bound of $\left\|\frac{\partial ^j \widetilde{Tu}}{\partial t^j}\right\|_\infty$ found in \eqref{bound_Tu} we can substitute $\phi_{\|u\|_\varphi}(s)$ by $\phi_{R}(s)$ and so we have found an upper bound which does not depend on $u$. Therefore it is clear that the set $T(B)$ is totally bounded.

On the other hand, taking into account the upper bound obtained in \eqref{cont_Tu}, we have that if $t_1,\, t_2\in\bR$ are such that $|t_1-t_2|<\delta$ then
\begin{equation*}
\left|\frac{\partial^j \widetilde{Tu}}{\partial t^j}(t_1)-\frac{\partial^j \widetilde{Tu}}{\partial t^j}(t_2)\right| \le  \varepsilon \left(\int_{-\infty}^{\infty} f(r, u(r)) \dif r +1 \right) \le \varepsilon \left(\int_{-\infty}^{\infty} \phi_R(r) \dif r +1 \right), \ j=0,\dots,n,
\end{equation*} 
and, since $\phi_R\in \Lsp{1}(\bR)$, we can conclude that $T(B)$ is equicontinuous. 

In conclusion, we derive, by application of Ascoli--Arzela's Theorem, that $T(B)$ is relatively compact in $\widetilde{\mathcal{C}}_{\varphi}^n$ and therefore $T$ is a compact operator.

\emph{$T$ maps $K_\alpha$ to $K_\alpha$:} It is an immediate consequence of conditions $(C_5)$ and $(C_6)$.
\end{proof}

Now we will give some conditions under which we can assure that the index of some subsets of $K_\alpha$ is 1 or 0. We will consider the following sets:
\begin{displaymath}K_\alpha^{\beta,\,\rho}:=\left\{u\in K_\alpha\ :\   0\le \beta(u)<\rho \right\},\end{displaymath}
\begin{displaymath}K_\alpha^{\gamma,\,\rho}:=\left\{u\in K_\alpha\ :\   0\le \gamma(u)<\rho \right\}.\end{displaymath}

We define now two functions $b,\,c\colon \bR^+\rightarrow \bR^+$ in the conditions of $(C_9)$:
\begin{displaymath}b(\rho):=\sup\left\{\beta(u)\ :\  \ u\in K_\alpha, \,  \gamma(u)<\rho \right\},\end{displaymath}
\begin{displaymath}c(\rho):=\sup\left\{\gamma(u)\ :\  \ u\in K_\alpha,  \, \beta(u)<\rho \right\}.\end{displaymath}

With these definitions, $K_\alpha^{\beta,\,\rho}\subset K_\alpha^{\gamma,\,c(\rho)}$ and $K_\alpha^{\gamma,\,\rho}\subset K_\alpha^{\beta,\,b(\rho)}$.

To prove that the index of some of these subsets is $1$ or $0$, we will use the following well-known sufficient conditions.

Let $K$ be a cone. If $\Omega\subset\bR^n$ is open we denote by $\overline{\Omega}$ and $\partial \Omega$, respectively,
its closure and its boundary. Moreover, we will note $\Omega_K=\Omega \cap K$, which is an open subset of $K$ in the relative topology.

\begin{lem}\label{lemind}
	Let $\Omega$ be an open bounded set with $0\in \Omega_{K}$ and $\overline{\Omega_{K}}\ne K$. Assume that $F:\overline{\Omega_{K}}\to K$ is
	a continuous compact map such that $x\neq Fx$ for all $x\in \partial \Omega_{K}$. Then the fixed point index\index{fixed point index} $i_{K}(F, \Omega_{K})$ has the following properties.
	\begin{itemize}
		\item[(1)] If there exists $e\in K\backslash \{0\}$ such that $x\neq Fx+\lambda e$ for all $x\in \partial \Omega_K$ and all $\lambda
		\ge0$, then $i_{K}(F, \Omega_{K})=0$.
		\item[(2)] If  $\mu x \neq Fx$ for all $x\in \partial \Omega_K$ and for every $\mu \geq 1$, then $i_{K}(F, \Omega_{K})=1$.
		\item[(3)] If $i_K(F,\Omega_K)\ne0$, then $F$ has a fixed point in $\Omega_K$.
		\item[(4)] Let $\Omega^{1}$ be open in $X$ with $\overline{\Omega^{1}}\subset \Omega_K$. If $i_{K}(F, \Omega_{K})=1$ and
		$i_{K}(F, \Omega_{K}^{1})=0$, then $F$ has a fixed point in $\Omega_{K}\backslash \overline{\Omega_{K}^{1}}$. The same result holds if
		$i_{K}(F, \Omega_{K})=0$ and $i_{K}(F, \Omega_{K}^{1})=1$.
	\end{itemize}
\end{lem}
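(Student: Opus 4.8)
The plan is to derive all four items from the standard axiomatic properties of the fixed point index for compact maps on a cone $K$ in a Banach space — normalization, additivity/excision, homotopy invariance, and the solution property — as developed in the references underlying \cite{FigToj}. So first I would note that under the hypotheses of the statement ($F\colon\overline{\Omega_K}\to K$ continuous and compact, $\overline{\Omega_K}$ closed and bounded, $0\in\Omega_K$, $\overline{\Omega_K}\ne K$, and $x\ne Fx$ on $\partial\Omega_K$) the index $i_K(F,\Omega_K)$ is well defined, and that all the homotopies below must be checked for admissibility on the \emph{relative} boundary $\partial\Omega_K=\partial\Omega\cap K$, not on $\partial\Omega$.

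For item (2) I would use the affine homotopy $H(t,x)=t\,Fx$, $t\in[0,1]$, $x\in\overline{\Omega_K}$. It maps into $K$ since $K$ is a cone, and it has no fixed point on $\partial\Omega_K$: if $x=t\,Fx$ with $t\in(0,1]$ then $\tfrac1t\,x=Fx$ with $\tfrac1t\ge1$, contradicting $\mu x\ne Fx$ for $\mu\ge1$; the case $t=0$ gives $x=0\notin\partial\Omega_K$ because $0\in\Omega_K$ is interior. Homotopy invariance then gives $i_K(F,\Omega_K)=i_K(0,\Omega_K)$, and normalization gives $i_K(0,\Omega_K)=1$ since $0\in\Omega_K$. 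For item (1) I would push the fixed points to infinity: since $\overline{\Omega_K}$ is bounded and $F$ is compact, $\sup_{x\in\overline{\Omega_K}}\|Fx\|<\infty$, so for $\lambda_0>0$ large enough $\|Fx+\lambda_0 e\|>\sup_{x\in\overline{\Omega_K}}\|x\|$ for all $x\in\overline{\Omega_K}$, whence $x\mapsto Fx+\lambda_0 e$ has no fixed point in $\overline{\Omega_K}$ at all and its index over $\Omega_K$ is $0$. The hypothesis $x\ne Fx+\lambda e$ for all $x\in\partial\Omega_K$, $\lambda\ge0$, makes $H(t,x)=Fx+t\lambda_0 e$ admissible on $\partial\Omega_K$, so by homotopy invariance $i_K(F,\Omega_K)=i_K(F+\lambda_0 e,\Omega_K)=0$.

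Item (3) is exactly the solution (existence) property of the index: a nonzero index over $\Omega_K$ forces a fixed point of $F$ in $\Omega_K$. Item (4) follows from additivity together with (3): with $\overline{\Omega^1}\subset\Omega_K$ and no fixed points of $F$ on $\partial\Omega_K^1\cup\partial\Omega_K$, excision gives
\[
i_K(F,\Omega_K)=i_K(F,\Omega_K^1)+i_K\!\big(F,\Omega_K\setminus\overline{\Omega_K^1}\big),
\]
so if $\{i_K(F,\Omega_K),i_K(F,\Omega_K^1)\}=\{0,1\}$ then $i_K(F,\Omega_K\setminus\overline{\Omega_K^1})=\pm1\ne0$, and (3) yields a fixed point in $\Omega_K\setminus\overline{\Omega_K^1}$.

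There is no real obstacle here — the whole content sits in the axioms of the cone index, which we take as given from \cite{FigToj}; the only points requiring a little care are (a) using that $0$ is interior to $\Omega_K$ so that it does not interfere with admissibility in (2) and supplies the normalization, and (b) using boundedness of $\Omega_K$ together with compactness of $F$ to produce the finite $\lambda_0$ in (1). I would present the argument in this order: well-definedness, then (2), then (1), then (3), then (4).
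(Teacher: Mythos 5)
The paper states this lemma without proof, presenting it as a ``well-known'' collection of properties of the fixed point index and deferring to the standard cone index theory (as in \cite{FigToj} and the classical references), so there is no in-paper argument to compare against. Your derivation from the index axioms is the standard proof and is correct: the homotopy $H(t,x)=tFx$ together with normalization at $0\in\Omega_K$ for item (2), the translation homotopy $Fx+t\lambda_0 e$ with $\lambda_0$ chosen so that $\lambda_0\|e\|-\sup_{x\in\overline{\Omega_K}}\|Fx\|$ exceeds $\sup_{x\in\overline{\Omega_K}}\|x\|$ for item (1), the solution property for item (3), and additivity/excision for item (4) are exactly the intended ingredients, and the admissibility checks you single out (that $0\notin\partial\Omega_K$ because $0$ is interior, and that the hypothesis in (4) already guarantees $F$ has no fixed points on $\partial\Omega_K^1$ since that index is assumed defined) are the right ones.
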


\begin{lem}
	Assume that
	
	$(I_\rho^1)$ there exists $\rho>0$ such that
	\begin{displaymath}f^\rho \int_{-\infty}^{\infty} \beta\left(k(\cdot,s)\,\eta(s)\right)\, \dif s +\frac{\beta(p)}{\rho}<1,\end{displaymath}
	where
	\begin{displaymath}f^\rho=\sup\left\{\frac{f(t,u(t))}{\rho}\ :\ t\in\bR,\ u\in K_\alpha, \ \beta(u)=\rho\right\}.\end{displaymath}
	
	Then  $i_{K_\alpha}(T, K_\alpha^{\beta,\,\rho})=1$.
\end{lem}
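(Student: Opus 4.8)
The plan is to prove this via part (2) of Lemma~\ref{lemind}, applied to the cone $K=K_\alpha$, the map $F=T$ and the relatively open set $\Omega_{K}=K_\alpha^{\beta,\,\rho}$. So it suffices to check the standing hypotheses of Lemma~\ref{lemind} and then to verify that $\mu\,u\neq Tu$ for every $u$ in the relative boundary $\partial K_\alpha^{\beta,\,\rho}$ and every $\mu\ge1$.

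For the standing hypotheses: by Theorem~\ref{thmk}, conditions $(C_1)$--$(C_6)$ ensure that $T$ is a continuous, compact self-map of $K_\alpha$. Since $\beta$ is continuous, $K_\alpha^{\beta,\,\rho}$ is relatively open in $K_\alpha$; it contains $0$ because $\beta(0)=\beta(0\cdot u)=0$ by the positive homogeneity in $(C_7)$; by $(C_5)$ and the positivity of $\beta(k(\cdot,s)\,\eta(s))$ in $(C_7)$ together with homogeneity there are elements of $K_\alpha$ with $\beta>\rho$, so $\overline{K_\alpha^{\beta,\,\rho}}\ne K_\alpha$; and, using the continuity of $\beta$ (and that $\beta\ge0$ on $K_\alpha$, built into the definition of the set), the relative boundary satisfies $\partial K_\alpha^{\beta,\,\rho}\subset\{u\in K_\alpha:\beta(u)=\rho\}$. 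The remaining admissibility requirement — that $K_\alpha^{\beta,\,\rho}$ be bounded in $\|\cdot\|_\varphi$ — is where $(C_9)$, through the function $b$, enters.

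The core of the argument is the following estimate. Suppose, towards a contradiction, that $Tu=\mu\,u$ for some $u\in K_\alpha$ with $\beta(u)=\rho$ and some $\mu\ge1$. Applying $\beta$ to both sides and using $\beta(\mu u)=\mu\,\beta(u)$ from $(C_7)$ gives $\mu\rho=\mu\,\beta(u)=\beta(Tu)$. Combining the inequality $\beta(Tu)\le\int_{-\infty}^{\infty}\beta\bigl(k(\cdot,s)\,\eta(s)\bigr)\,f(s,u(s))\,\dif s+\beta(p)$ from $(C_7)$ with the pointwise bound $f(s,u(s))\le\rho\,f^\rho$, valid for a.\,e.\ $s\in\bR$ since $\beta(u)=\rho$, and the positivity of $\beta\bigl(k(\cdot,s)\,\eta(s)\bigr)$, we obtain
\[\mu\rho\le\rho\,f^\rho\int_{-\infty}^{\infty}\beta\bigl(k(\cdot,s)\,\eta(s)\bigr)\,\dif s+\beta(p).\]
Dividing by $\rho>0$ yields $\mu\le f^\rho\int_{-\infty}^{\infty}\beta\bigl(k(\cdot,s)\,\eta(s)\bigr)\,\dif s+\beta(p)/\rho$, which is strictly less than $1$ by $(I_\rho^1)$ — contradicting $\mu\ge1$. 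Taking $\mu=1$ in particular shows $T$ has no fixed point on $\partial K_\alpha^{\beta,\,\rho}$, so the index is defined, and Lemma~\ref{lemind}(2) gives $i_{K_\alpha}(T,K_\alpha^{\beta,\,\rho})=1$.

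I expect the genuinely delicate part to be not the displayed estimate — which is a one-line consequence of $(C_7)$ once the definition of $f^\rho$ is unwound — but the verification that $K_\alpha^{\beta,\,\rho}$ is a legitimate open, bounded subset of the cone with the expected relative boundary; this is the step that forces conditions such as $(C_8)$ and $(C_9)$ into the hypotheses.
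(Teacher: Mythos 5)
Your proposal is correct and follows essentially the same route as the paper: assume $\mu\,u=Tu$ with $\mu\ge1$ on the boundary, apply $\beta$, use the subadditivity-type estimate from $(C_7)$ together with the definition of $f^\rho$ to reach $\mu\rho<\rho$, and conclude via part (2) of Lemma~\ref{lemind}. The paper states only this core estimate and leaves the admissibility of $K_\alpha^{\beta,\,\rho}$ (openness, boundedness, $0\in\Omega_K$, boundary characterization) implicit, so your additional checks are a harmless elaboration rather than a divergence.
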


\begin{proof}
	We will prove that $Tu\neq \mu\,u$ for all $u\in \partial K_\alpha^{\beta,\,\rho}$ and for every $\mu\ge 1$.
	
	Suppose, on the contrary, that there exist some $u\in \partial K_\alpha^{\beta,\,\rho}$ and $\mu\ge 1$ such that
	\begin{displaymath}\mu\,u(t)=\int_{-\infty}^{\infty}k(t,s)\,\eta(s)\,f(s,u(s))\, \dif s + p(t).\end{displaymath}
	
	Then, taking $\beta$ on both sides and using $(C_7)$, we get
	\begin{equation*}\begin{split}
			\mu\,\rho=& \,\mu\,\beta(u)=\beta(Tu)\le \int_{-\infty}^{\infty}\beta(k(\cdot,s)\,\eta(s))\,f(s,u(s))\, \dif s +\beta(p) \\
			\le & \,\rho\, \left( f^\rho \int_{-\infty}^{\infty}\beta\left(k(\cdot,s)\,\eta(s)\right)\, \dif s + \frac{\beta(p)}{\rho}\right) <\rho,
		\end{split}\end{equation*}
		which is a contradiction. Therefore we conclude the veracity of the result.
	\end{proof}

	\begin{lem}
		Assume that
		
		$(I_\rho^0)$ there exists $\rho>0$ such that
		\begin{displaymath}f_\rho \int_{-\infty}^{\infty} \gamma(k(\cdot,s)\,\eta(s))\, \dif s + \frac{\gamma(p)}{\rho}>1,\end{displaymath}
		where
		\begin{displaymath}f_\rho=\inf\left\{\frac{f(t,u(t))}{\rho}\ :\  t\in\bR,\ u\in K_\alpha, \gamma(u)=\rho\right\}.\end{displaymath}
		
		Then  $i_{K_\alpha}(T,K_\alpha^{\gamma,\,\rho})=0$.
	\end{lem}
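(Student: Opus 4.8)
The plan is to apply part~(1) of Lemma~\ref{lemind}: it suffices to exhibit an $e\in K_\alpha\setminus\{0\}$ for which $u\neq Tu+\lambda\,e$ holds for every $u\in\partial K_\alpha^{\gamma,\,\rho}$ and every $\lambda\ge0$. The obvious choice is $e=\xi$, the element supplied by $(C_8)$, which belongs to $K_\alpha\setminus\{0\}$ and satisfies $\gamma(\xi)\ge0$. Before invoking Lemma~\ref{lemind} one checks its standing hypotheses for $\Omega_{K_\alpha}=K_\alpha^{\gamma,\,\rho}$: it is open in $K_\alpha$ by continuity of $\gamma$; it contains $0$ because $\gamma(0)=0$ (which follows from the superadditivity and positive homogeneity of $\gamma$ in $(C_7)$); it is bounded and its closure is a proper subset of $K_\alpha$, which is exactly what $(C_9)$ and the positivity assumptions in $(C_7)$ are there to guarantee (via the inclusion $K_\alpha^{\gamma,\,\rho}\subset K_\alpha^{\beta,\,b(\rho)}$ and the accompanying norm control). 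That $T$ restricted to $\overline{K_\alpha^{\gamma,\,\rho}}$ is a continuous compact map into $K_\alpha$ is Theorem~\ref{thmk}.

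The heart of the proof is a short computation by contradiction, entirely parallel to the previous lemma but with the inequalities reversed. Suppose there were $u\in\partial K_\alpha^{\gamma,\,\rho}$ and $\lambda\ge0$ with
\[u(t)=\int_{-\infty}^{\infty}k(t,s)\,\eta(s)\,f(s,u(s))\,\dif s+p(t)+\lambda\,\xi(t),\qquad t\in\bR .\]
Since $\gamma$ is continuous, every point of $\partial K_\alpha^{\gamma,\,\rho}$ satisfies $\gamma(u)=\rho$, just as in the index~$1$ lemma. Applying $\gamma$ and using superadditivity and positive homogeneity from $(C_7)$, then $\gamma(\xi)\ge0$ from $(C_8)$, and finally the lower estimate for $\gamma(Tu)$ in $(C_7)$, we obtain
\[\rho=\gamma(u)\ge\gamma(Tu)+\lambda\,\gamma(\xi)\ge\gamma(Tu)\ge\int_{-\infty}^{\infty}\gamma\left(k(\cdot,s)\,\eta(s)\right)f(s,u(s))\,\dif s+\gamma(p).\]
Because $\gamma(u)=\rho$, the definition of $f_\rho$ gives $f(s,u(s))\ge f_\rho\,\rho$ for every $s\in\bR$, and since $\gamma\left(k(\cdot,s)\,\eta(s)\right)$ is positive by $(C_7)$ the inequality is preserved under the integral:
\[\rho\ge f_\rho\,\rho\int_{-\infty}^{\infty}\gamma\left(k(\cdot,s)\,\eta(s)\right)\dif s+\gamma(p).\]
Dividing by $\rho>0$ yields $1\ge f_\rho\int_{-\infty}^{\infty}\gamma\left(k(\cdot,s)\,\eta(s)\right)\dif s+\gamma(p)/\rho$, which contradicts $(I_\rho^0)$. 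Hence no such pair $(u,\lambda)$ exists, and Lemma~\ref{lemind}(1) gives $i_{K_\alpha}(T,K_\alpha^{\gamma,\,\rho})=0$.

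I expect the only genuinely delicate step to be the verification that $K_\alpha^{\gamma,\,\rho}$ satisfies the topological hypotheses of Lemma~\ref{lemind} --- boundedness and properness of the closure --- since these do not come for free and rely on $(C_9)$ together with the positivity conditions on $\gamma$; this should either be written out or cited from the discussion of $b(\rho)$ and $c(\rho)$ preceding the lemma. The remaining ingredients are routine: well-definedness of $\int_{-\infty}^{\infty}\gamma\left(k(\cdot,s)\,\eta(s)\right)f(s,u(s))\,\dif s$ follows from $\gamma\left(k(\cdot,s)\,\eta(s)\right)\in\Lsp{1}(\bR)$ in $(C_7)$ and the Carath\'eodory bound in $(C_2)$, and the manipulations with $\gamma$ use only the properties listed in $(C_7)$--$(C_8)$.
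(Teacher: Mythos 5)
Your argument coincides with the paper's: both take $e=\xi$ from $(C_8)$, assume $u=Tu+\lambda e$ on $\partial K_\alpha^{\gamma,\rho}$, apply $\gamma$ using $(C_7)$--$(C_8)$ and the definition of $f_\rho$ to reach $\rho>\rho$ (your division by $\rho$ is just a cosmetic rearrangement of the same chain of inequalities). Your additional remarks on the topological hypotheses of Lemma \ref{lemind} and on using $\lambda\ge0$ rather than $\lambda>0$ are careful touches the paper glosses over, but they do not change the route.
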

	
	\begin{proof}
		We will prove that there exists $e\in K_\alpha^{\gamma,\,\rho}\setminus\{0\}$ such that $u\neq Tu+\lambda\,e$ for all $u\in \partial K_\alpha^{\gamma,\,\rho}$ and all $\lambda>0$.
		
		Let us take $e$ as in $(C_8)$ and suppose that there exist some $u\in\partial K_\alpha^{\gamma,\,\rho}$ and $\lambda>0$ such that 
		\begin{displaymath}u(t)=\int_{-\infty}^{\infty}k(t,s)\,\eta(s)\,f(s,u(s))\, \dif s+ p(t) + \lambda\,e(t).\end{displaymath} 
		Now, taking $\gamma$ on both sides and using $(C_7)$ and $(C_8)$,
		\begin{equation*}\begin{split}
		\rho&=\gamma(u)=\gamma(Tu+\lambda\,e)\ge \gamma(Tu)+\lambda\,\gamma(e) \ge \gamma(Tu) \ge \int_{-\infty}^{\infty}\gamma\left(k(\cdot,s)\,\eta(s)\right)\,f(s,u(s))\, \dif s + \gamma(p) \\
		 &\ge \rho\, f_\rho \int_{-\infty}^{\infty}\gamma\left(k(\cdot,s)\,\eta(s)\right)\, \dif s +\gamma(p)>\rho,
		\end{split}\end{equation*}
		which is a contradiction.
	\end{proof}
	
	From the previous Lemmas, it is possible to formulate the following Theorem. In this case, we establish conditions to ensure the existence of one or two solutions of the integral equation \eqref{eqhamm}. However, similar results can be formulated to ensure the existence of three or more solutions.
	
	\begin{thm}
		The integral equation \eqref{eqhamm} has at least one nontrivial solution in $K_\alpha$ if one of the following conditions hold
		\begin{itemize}
			\item[$(S_1)$] There exist $\rho_1,\,\rho_2\in (0,\infty)$ with $\rho_2>b(\rho_1)$ such that $(I^0_{\rho_1})$ and $(I^1_{\rho_2})$ hold.
			\item[$(S_2)$] There exist $\rho_1,\,\rho_2\in (0,\infty)$ with $\rho_2>c(\rho_1)$ such that $(I^1_{\rho_1})$ and $(I^0_{\rho_2})$ hold.
		\end{itemize}
		The integral equation \eqref{eqhamm} has at least two nontrivial solutions in $K_\alpha$ if one of the following conditions hold
		\begin{itemize}
			\item[$(S_3)$] There exist $\rho_1,\,\rho_2,\,\rho_3\in (0,\infty)$ with $\rho_2>b(\rho_1)$ and $\rho_3>c(\rho_2)$ such that $(I^0_{\rho_1})$, $(I^1_{\rho_2})$ and $(I^0_{\rho_3})$ hold.
			\item[$(S_4)$] There exist $\rho_1,\,\rho_2,\,\rho_3\in (0,\infty)$ with $\rho_2>c(\rho_1)$ and $\rho_3>b(\rho_2)$ such that $(I^1_{\rho_1})$, $(I^0_{\rho_2})$ and $(I^1_{\rho_3})$ hold.
		\end{itemize}
	\end{thm}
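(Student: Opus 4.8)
The plan is to read off the value of the fixed point index on the sets $K_\alpha^{\beta,\rho}$ and $K_\alpha^{\gamma,\rho}$ from the two preceding Lemmas, and then to locate fixed points in ``annular'' regions by invoking Lemma \ref{lemind}(4). By Theorem \ref{thmk} the operator $T$ is a continuous compact self-map of $K_\alpha$, so the index $i_{K_\alpha}(T,\cdot)$ is defined on the bounded relatively open sets under consideration, and the standing hypotheses of Lemma \ref{lemind} ($0$ lying in the set, closure not all of $K_\alpha$) are met because $\beta(0)=\gamma(0)=0$ and the sets are bounded.

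First I would treat $(S_1)$. Condition $(I^0_{\rho_1})$ gives, via the Lemma on index zero, $i_{K_\alpha}(T,K_\alpha^{\gamma,\rho_1})=0$, while $(I^1_{\rho_2})$ gives, via the Lemma on index one, $i_{K_\alpha}(T,K_\alpha^{\beta,\rho_2})=1$. Using the inclusion $K_\alpha^{\gamma,\rho_1}\subset K_\alpha^{\beta,b(\rho_1)}$ together with $\rho_2>b(\rho_1)$, I would deduce the closure containment $\overline{K_\alpha^{\gamma,\rho_1}}\subset K_\alpha^{\beta,\rho_2}$. Then Lemma \ref{lemind}(4), applied with outer set $K_\alpha^{\beta,\rho_2}$ (index $1$) and inner set $K_\alpha^{\gamma,\rho_1}$ (index $0$), furnishes a fixed point in $K_\alpha^{\beta,\rho_2}\setminus\overline{K_\alpha^{\gamma,\rho_1}}$; since $0\in K_\alpha^{\gamma,\rho_1}$ this fixed point is nontrivial. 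The case $(S_2)$ is symmetric, exchanging the roles of $\beta$ and $\gamma$: $(I^1_{\rho_1})$ and $(I^0_{\rho_2})$ yield indices $1$ and $0$ on $K_\alpha^{\beta,\rho_1}$ and $K_\alpha^{\gamma,\rho_2}$, the inequality $\rho_2>c(\rho_1)$ together with $K_\alpha^{\beta,\rho_1}\subset K_\alpha^{\gamma,c(\rho_1)}$ gives $\overline{K_\alpha^{\beta,\rho_1}}\subset K_\alpha^{\gamma,\rho_2}$, and Lemma \ref{lemind}(4) (with the indices $0$ and $1$ interchanged) produces a nontrivial fixed point in $K_\alpha^{\gamma,\rho_2}\setminus\overline{K_\alpha^{\beta,\rho_1}}$.

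For the multiplicity statements I would simply stack two such annuli. Under $(S_3)$ the three conditions yield $i_{K_\alpha}(T,K_\alpha^{\gamma,\rho_1})=0$, $i_{K_\alpha}(T,K_\alpha^{\beta,\rho_2})=1$ and $i_{K_\alpha}(T,K_\alpha^{\gamma,\rho_3})=0$, and the strict inequalities $\rho_2>b(\rho_1)$ and $\rho_3>c(\rho_2)$ give the chain $\overline{K_\alpha^{\gamma,\rho_1}}\subset K_\alpha^{\beta,\rho_2}$ and $\overline{K_\alpha^{\beta,\rho_2}}\subset K_\alpha^{\gamma,\rho_3}$. Applying Lemma \ref{lemind}(4) to the two nested pairs produces fixed points in $K_\alpha^{\beta,\rho_2}\setminus\overline{K_\alpha^{\gamma,\rho_1}}$ and in $K_\alpha^{\gamma,\rho_3}\setminus\overline{K_\alpha^{\beta,\rho_2}}$; these regions are disjoint (the first lies inside $K_\alpha^{\beta,\rho_2}$, the second outside its closure), so the two fixed points are distinct and both nonzero. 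The case $(S_4)$ is identical after swapping $\beta\leftrightarrow\gamma$ and $b\leftrightarrow c$.

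The index computations are already done in the two Lemmas, so the only delicate point — and the step I expect to be the main obstacle — is turning the plain set inclusions $K_\alpha^{\gamma,\rho}\subset K_\alpha^{\beta,b(\rho)}$ and $K_\alpha^{\beta,\rho}\subset K_\alpha^{\gamma,c(\rho)}$ into the \emph{closure} inclusions $\overline{\Omega^1}\subset\Omega_K$ that Lemma \ref{lemind}(4) requires. This is precisely where the strict inequalities $\rho_2>b(\rho_1)$, $\rho_3>c(\rho_2)$ and $\rho_2>c(\rho_1)$ are used: continuity of $\beta$ and $\gamma$ carries the bound $\beta(u)\le b(\rho_1)$ (resp.\ $\gamma(u)\le c(\rho_1)$) from the set to its closure, and the strict gap to the next radius keeps that closure inside the corresponding open set.
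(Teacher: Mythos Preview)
Your argument is correct and is exactly the standard route the paper has in mind: the theorem is stated in the paper without proof, as an immediate consequence of the two index Lemmas and Lemma \ref{lemind}(4), and your write-up fills in precisely that omitted step (index computation on each level set, nesting via the functions $b$ and $c$, then excision). Your closing remark about why the \emph{strict} inequalities $\rho_2>b(\rho_1)$ etc.\ are needed to pass from the raw inclusions $K_\alpha^{\gamma,\rho}\subset K_\alpha^{\beta,b(\rho)}$ to the closure inclusions required by Lemma \ref{lemind}(4) is the right observation and is the only nonautomatic point in the argument.
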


	\begin{rem}\label{rem-int}
		We note that the previous results could also be formulated for $\til\cC_\varphi([a,\infty))$ or $\til\cC_\varphi((-\infty,a]))$ for any $a\in\bR$.
		
		Furthermore, we could also formulate previous theory in the space $\widetilde\cC_{\varphi,a,b}^n$. In particular, it is clear that, under hypotheses $(C_{1})-(C_{6})$, $T$ maps $\widetilde\cC_{\varphi,a,b}^n$ to itself, is continuous and compact. 
	\end{rem}

\section{An example}
We will finally apply the theory in the previous sections to a modification of problem \eqref{eqorr2}. As stated in the introduction, we ignore the friction term (the term depending on $u'$) because it is only related to atmospheric drag and therefore does not affect the asymptotic behavior

Hence, we study the problem
\begin{equation}\label{eqorr3}
u''(t)=f(t,u(t)),\ t\in[0,\infty);\quad u(0)=0,\ u'(0)=v_0,
\end{equation}
with $f:[0,\infty)^2\to\bR$ defined as
\[f(t,y)=-\frac{g\,R^2}{(y+R)^2}+h(t,y),\]
$h:[0,\infty)^2\to\bR$. Given the domain of $f$ and $h$ and taking into account Remark \ref{rem-int}, we will work on the interval $[0,\infty)$.

Rewriting \eqref{eqorr3} as an integral problem, we know that the solutions of \eqref{eqorr3} coincide with the fixed points of the following operator,
\begin{equation}
Tu(t)=p(t)+\int_{0}^{\infty}k(t,s)\,f(s,u(s))\,\dif s,
\end{equation}
where
\[p(t)=v_0\,t\]
and
\[k(t,s)=\left\{\begin{array}{ll}
t-s, & 0\le s\le t, \\[.2cm]
0, & \text{otherwise},
\end{array} \right.\]
is the corresponding Green's function. We note that in this case $k(t,s)\ge 0$ on $[0,\infty)^2$.

We  take 
\[h(s,y)=\frac{g\,R^2}{(y+R)^2}+y\,e^{-s}\]
for $s,y\in [0,\infty)$.

To ensure the constant sign of $f$, we extend $h$ (and thus $f$) in the following way:
\begin{equation*}
h(s,y)=\frac{g\,R^2}{(y+R)^2} \text{ for } y<0.
\end{equation*}

We will consider
\[\varphi(t)=t+1,\]
and  work in the space $\til\cC_\varphi([0,\infty))$. Our cone \[K_\alpha=\left\{u\in \til\cC_\varphi([0,\infty)) \ \colon \ \alpha(u)\ge 0\right\}\] will be defined by the functional
\begin{displaymath}\alpha(u)=\int_{0}^{\infty}\frac{u(t)}{\varphi_2(t)}\,\dif t-\|u\|_{\varphi_3},\end{displaymath}
with $\varphi_2(t)=c\,e^t$ for some constant $c>0$, which will be calculated later, and $\varphi_3(t)=e^t$.

The functional $\alpha$ is well defined because if $u\in K_\alpha$, then $u\in \widetilde{\cC}_\varphi$, that is, $u(t)=(t+1) \, \tilde{u}(t)$, with $\tilde{u} \in \cC(\ol \bR,\bR)$, which implies that $\tilde{u}$ is uniformly bounded for some constant $N$. Then,
\begin{equation*}\begin{split}
\left|\int_{0}^{\infty} \frac{u(t)}{c\,e^t} \dif t\right| &= \left|\int_{0}^{\infty} \frac{(t+1)\, \tilde{u}(t)}{c\,e^t} \dif t\right| \le \int_{0}^{\infty} \frac{t \, |\tilde{u}(t)|}{c\,e^t} \dif t +\int_{0}^{\infty} \frac{|\tilde{u}(t)|}{c\,e^t} \dif t \\
&\le N \left(\int_{0}^{\infty} \frac{t}{c\,e^t} \dif t +\int_{0}^{\infty} \frac{1}{c\,e^t} \dif t \right) = \frac{2\,N}{c},\end{split}\end{equation*}
and
\begin{displaymath}\sup_{t\in[0,\infty)}\frac{|u(t)|}{e^t} \le \sup_{t\in[0,\infty)}\frac{|u(t)|}{t+1}=\|u\|_\varphi,\end{displaymath}
so $\alpha(u)\in \bR$ for all $u\in K_\alpha$.

Moreover, it is immediate to check that $\alpha$ satisfies properties $(P_1)-(P_3)$ and therefore the cone $K_\alpha$ is well defined.

We will see now that hypothesis $(C_1)$-$(C_9)$ for $n=0$ are satisfied:

\begin{itemize}
	\item[$(C_1)$] In this case $\eta\equiv 1$ and $k(t,\cdot)\,\eta(\cdot)\in \Lsp{1}(\bR)$ for every $t\in\bR$; indeed
	\begin{equation*}
	\int_{0}^{\infty}\left|k(t,s)\,\eta(s)\right|\dif s = \int_{0}^{t} (t-s) \dif s = \frac{t^2}{2}.
	\end{equation*}
	Moreover, $k(\cdot,s)\,\eta(s)\in\til\cC_\varphi$ for every $s\in\bR$ since $k(\cdot,s)\,\eta(s)\in\cC(\bR)$ and there exist
	\[\lim\limits_{t\rightarrow \infty}\frac{k(t,s)\,\eta(s)}{\varphi(t)} = \lim\limits_{t\rightarrow \infty}\frac{t-s}{t+1}=1\]
	and
	\[\lim\limits_{t\rightarrow 0}\frac{k(t,s)\,\eta(s)}{\varphi(t)} = 0.\]
	Finally, we will see that last condition in $(C_1)$ is satisfied for $\omega_0(s)=1+s$.
	
	Fix $\varepsilon>0$. Since $\frac{1}{\varphi}$ is a uniformly continuous function, there exists $\delta<\varepsilon$ such that for $|t_1-t_2|<\delta$, $\left|\frac{1}{t_1+1}-\frac{1}{t_2+1} \right|<\varepsilon$. We will compute now the difference $\left|\frac{k(t_1,s)}{\varphi(t_1)}-\frac{k(t_2,s)}{\varphi(t_2)} \right|$. Fix $s\in [0,\infty)$,
	\begin{itemize}
		\item If $t_1,\,t_2>s$, then
		\begin{equation*}\begin{split}
		\left|\frac{k(t_1,s)}{\varphi(t_1)}-\frac{k(t_2,s)}{\varphi(t_2)} \right| & =\left|\frac{t_1-s}{t_1+1}-\frac{t_2-s}{t_2+1}\right| = \left|\frac{-1-s}{t_1+1}-\frac{-1-s}{t_2+1}\right|=(1+s)\left|\frac{1}{t_1+1}-\frac{1}{t_2+1} \right| \\  &<\varepsilon\,\omega_0(s).
		\end{split}\end{equation*}
		
		\item If $t_1>s$ and $t_2<s$, then
		\begin{equation*}\begin{split}
		\left|\frac{k(t_1,s)}{\varphi(t_1)}-\frac{k(t_2,s)}{\varphi(t_2)} \right| =\left|\frac{t_1-s}{t_1+1}\right|< \left|\frac{t_1-t_2}{t_1+1}\right|<\frac{\varepsilon}{t_1+1}<\varepsilon<\varepsilon\,\omega_0(s).
		\end{split}\end{equation*}
		
		\item If $t_1,\,t_2<s$, then
		\begin{equation*}\begin{split}
		\left|\frac{k(t_1,s)}{\varphi(t_1)}-\frac{k(t_2,s)}{\varphi(t_2)} \right|=0.
		\end{split}\end{equation*}
	\end{itemize}
		
	\item[$(C_2)$] By definition of $h$, we have that $f(t,y)=0$ for $y<0$ and $f(t,y)=y\,e^{-t}\ge 0$ for $y\ge 0$. Clearly, $f(\cdot,y)$ is measurable for each fixed $y\in\bR$ and $f(t,\cdot)$ is continuous for a.\,a. $t\in\bR$. Finally, for each $r>0$, $f(t,y\varphi(t))=0$ for all $y\in[-r,0]$ and
	\[f(t,y\varphi(t))=y\,\varphi(t)\,e^{-t}\le r\,\varphi(t)\,e^{-t} \]
	for all $y\in[0,r]$. Therefore condition $(C_2)$ is satisfied if we take $\phi_r(t)=r\,\varphi(t)\,e^{-t}$.
	
	\item[$(C_3)$] For a fixed $R\in \bR$, we have that
	\begin{equation*}
	\frac{1}{\varphi(t)}\int_{0}^{\infty}\left|k(t,s)\,\eta(s)\right|\phi_R(s)\dif s = \frac{1}{t+1}\int_{0}^{t} (t-s) \, R \,(s+1) \, e^{-s}\dif s = \frac{R}{t+1}\, (-3+2t+e^{-t}(3+t)),
	\end{equation*}
	so $\frac{1}{\varphi(t)}\int_{0}^{\infty}\left|k(t,s)\,\eta(s)\right|\phi_R(s)\dif s \in \Lsp{\infty}(\bR)$. Moreover,
	\[\int_{0}^{\infty}\omega_0(s)\,\phi_R(s)\dif s= \int_{0}^{\infty}R \,(s+1)^2 \, e^{-s}\dif s=5\,R, \]
	that is, $\omega_0\,\phi_R\in \Lsp{1}(\bR)$. 
	
	Finally, from the limits calculated in $(C_1)$ and the expression of Green's function, we have that $z^+(s)=1$, $z^-(s)=0$ and $M(s)=1$, so it is clear that $\left|z^+\right|\,\phi_R, \, \left|z^-\right|\,\phi_R, \,M\,\phi_R\in \Lsp{1}(\bR)$. 
	
	\item[$(C_4)$] It is clear that $p(t)=v_0\, t \in \widetilde \cC_\varphi$ since $p\in\cC(\bR)$ and there exist
	\[\lim\limits_{t\rightarrow \infty} \frac{p(t)}{\varphi(t)}=v_0 \]
	and
	\[\lim\limits_{t\rightarrow 0} \frac{p(t)}{\varphi(t)}=0. \]
	
	\item[$(C_5)$] We have to prove that 
	\begin{displaymath}\alpha(k(\cdot,s))=\int_{0}^{\infty}\frac{k(\tau,s)}{\varphi_2(\tau)} \dif \tau -\|k(\cdot,s)\|_{\varphi_3} \ge 0 \quad \text{for a. e. } s\in \bR.\end{displaymath}
	We have that
	\begin{equation*}\begin{split}
	\int_{0}^{\infty}\frac{k(\tau,s)}{\varphi_2(\tau)} \dif \tau &= \int_{s}^{\infty} \frac{\tau-s}{c\, e^\tau} \dif \tau  = \frac{e^{-s}}{c}.
	\end{split}\end{equation*}
	
	On the other hand, fixed $s$, we have that 
	\[\left|\frac{k(t,s)}{e^t}\right|=0, \quad t\le s, \]
	and
	\[\left|\frac{k(t,s)}{e^t}\right| = \frac{t-s}{e^t} = e^{-s}\, \frac{t-s}{e^{t-s}} \le e^{-s}\,e^{-1}, \quad t\ge s. \]
	
	Therefore, it is enough to take $c\le e$ to ensure that $\alpha(k(\cdot,s)) \ge 0$.
	
	On the other hand, 
	\[\alpha(p)=\int_{0}^{\infty} \frac{p(t)}{\varphi_2(t)} \dif t -\|p\|_{\varphi_3}= \int_{0}^{\infty} \frac{v_0\,t}{c\,e^t} \dif t-\max_{t\in[0,\infty)}\left|\frac{v_0\,t}{e^t}\right|= \frac{v_0}{c}-v_0\,e^{-1}.\]
	
	Therefore, $\alpha(p)\ge 0$ if and only if $c\le e$.
	
	\item[$(C_6)$] By definition,
	\begin{equation*}\begin{split}
	\alpha(Tu)=&\int_{0}^{\infty}\frac{Tu(t)}{\varphi_2(t)} \dif t-\|Tu\|_{\varphi_3}.
	\end{split}\end{equation*}
	We have that
	\begin{equation*}\begin{split}
	\int_{0}^{\infty}\frac{Tu(t)}{\varphi_2(t)} \dif t = & \int_{0}^{\infty}\left(\int_{0}^{\infty}\frac{k(t,s)}{\varphi_2(t)} f(s,u(s)) \dif s + \frac{p(t)}{\varphi_2(t)}\right) \dif t \\
	= & \int_{0}^{\infty} \left(\int_{0}^{\infty}\frac{k(t,s)}{\varphi_2(t)} \dif t\right) f(s,u(s)) \dif s +\int_{0}^{\infty} \frac{p(t)}{\varphi_2(t)} \dif t,
	\end{split}\end{equation*}
	and
	\begin{equation*}\begin{split}
	\|Tu\|_{\varphi_3}= &\left\|\int_{0}^{\infty}k(\cdot,s) \,f(s,u(s)) \dif s + p \right\|_{\varphi_3} \le \left\|\int_{0}^{\infty}k(\cdot,s) \, f(s,u(s)) \, \dif s \right\|_{\varphi_3} + \|p\|_{\varphi_3} \\
	\le & \int_{0}^{\infty} \|k(\cdot,s)\|_{\varphi_3} \,f(s,u(s)) \,\dif s + \|p\|_{\varphi_3},
	\end{split}\end{equation*}
	and, consequently,
	\begin{equation*}\begin{split}
	\alpha(Tu)\ge & \int_{0}^{\infty} \left(\int_{0}^{\infty}\frac{k(t,s)}{\varphi_2(t)} \dif t\right) f(s,u(s)) \dif s -\int_{0}^{\infty} \|k(\cdot,s)\|_{\varphi_3} \, f(s,u(s)) \dif s +\int_{0}^{\infty} \frac{p(t)}{\varphi_2(t)} \dif t -\|p\|_{\varphi_3} \\
	= & \int_{0}^{\infty}\alpha(k(\cdot,s)) \,f(s,u(s)) \dif s +\alpha(p).
	\end{split}\end{equation*}

	\item[$(C_7)$] We will define $\beta, \gamma \colon \widetilde \cC_\varphi \rightarrow \bR$ in the following way:
	\begin{displaymath}\beta(u)=\|u\|_{\varphi_3}, \text{ with } \varphi_3(t)=e^t,\end{displaymath}
	and
	\begin{displaymath}\gamma(u)=\int_{0}^{\infty}\frac{u(t)}{e^t} \, \dif t.\end{displaymath}
	Analogously to $\alpha$, functionals $\beta$ and $\gamma$ are well defined.
	
	Now, we will show that $\beta$ and $\gamma$ satisfy all the properties in condition $(C_7)$. It is obvious that $\beta(\lambda u)=\lambda\, \beta(u)$ for all $\lambda\in[0,\infty)$ and $u\in K_\alpha$.
	
	Moreover,
	\begin{equation*}\begin{split}
	\beta(Tu)=&\left\| Tu\right\|_{\varphi_3} \le \int_{0}^{\infty} \|k(\cdot,s)\|_{\varphi_3} \, f(s,u(s)) +\|p\|_{\varphi_3} =\int_{0}^{\infty} \beta(k(\cdot,s)) \,f(s,u(s))+\beta(p).
	\end{split}\end{equation*}
	
	Finally, it is clear that $\beta(k(\cdot,s))>0$ and, since $\beta(k(\cdot,s))=\sup\limits_{t\in[s,\infty)}\dfrac{t-s}{e^t}\le e^{-(s+1)}$,
	\begin{displaymath}0<\int_{0}^{\infty}\beta(k(\cdot,s)) \dif s \le \int_{0}^{\infty} e^{-(s+1)} \dif s=e^{-1},\end{displaymath}
	that is, $\beta(k(\cdot,s))\in L^1[0,\infty)$.
	
	With regard to $\gamma$, it is immediate that $\gamma$ is linear.
	
	Also, 
	\begin{equation*}\begin{split}
	\gamma(Tu)=&\int_{0}^{\infty}\frac{Tu(t)}{e^t} \,\dif t=\int_{0}^{\infty}\int_{0}^{\infty} \frac{k(t,s)}{e^t} \, f(s,u(s)) \,\dif s \,\dif t +\int_{0}^{\infty}\frac{p(t)}{e^t} \,\dif t \\
	= &\int_{0}^{\infty}\left(\int_{0}^{\infty} \frac{k(t,s)}{e^t}\dif t\right) \, f(s,u(s)) \, \dif s +\gamma(p) =\int_{0}^{\infty}\gamma(k(\cdot,s)) \, f(s,u(s)) \, \dif s +\gamma(p).
	\end{split}\end{equation*}

	Finally, 
	\begin{displaymath}\gamma(k(\cdot,s))=\int_{s}^{\infty}\frac{t-s}{e^t} \dif t = e^{-s}>0 ,\quad  s\in[0,\infty),\end{displaymath}
	and
	\begin{equation*}\begin{split}
	\int_{0}^{\infty}\gamma(k(\cdot,s)) \dif s =&\int_{0}^{\infty}\int_{0}^{\infty}\frac{k(t,s)}{e^t} \dif t \dif s =\int_{0}^{\infty} \int_{0}^{t} \frac{t-s}{e^t} \dif s \dif t= \int_{0}^{\infty}\frac{t^2}{2\,e^t} \dif t = 1,
	\end{split}\end{equation*}
	that is, $\gamma(k(\cdot,s))\in L^1[0,\infty)$.
	
	\item[$(C_8)$] By condition $(C_5)$ we know that $p \in K_\alpha\setminus\{0\}$. Since, 
	\begin{displaymath}\gamma(p)=\int_{0}^{\infty}\frac{p(t)}{e^t}\,\dif t= \int_{0}^{\infty}\frac{v_0\,t}{e^t}\,\dif t =v_0 >0,\end{displaymath}
	it is enough to take $\xi=p$.

	\item[$(C_9)$] Every $u\in K_\alpha$ satisfies that $\beta(u)\le \frac{1}{c} \, \gamma(u)$, so it is enough to define $b(\rho)=\frac{\rho}{c}$. 
\end{itemize}

Now, we will see that there exist some values of $\rho$ for which $\left(I^0_{\rho}\right)$ and $\left(I^1_{\rho}\right)$ are satisfied:

Let's take $u\in K_\alpha$ such that $\beta(u)=\rho$. Then
\[|u(t)|\le\rho\, e^t, \quad  t\in[0,\infty), \]
and
\[f^{\rho}\le \sup_{t\in[0,\infty)} \frac{\rho\, e^t \, e^{-t}}{\rho}=1.\]
Consequently,
\[f^{\rho} \int_{0}^{\infty}\beta(k(\cdot,s)) \, \dif s + \frac{\beta(p)}{\rho} \le  e^{-1} + \frac{v_0 \, e^{-1}}{\rho},\]
and $\left(I^1_{\rho}\right)$ is satisfied for all $\rho>\frac{ e^{-1}}{1-e^{-1}}v_0 = 0.58197\ldots \cdot v_0$.

On the other hand, $f_{\rho}\ge 0$ and so
\[f_{\rho} \int_{0}^{\infty}\gamma(k(\cdot,s)) \, \dif s + \frac{\gamma(p)}{\rho} \ge \frac{\gamma(p)}{\rho} = \frac{v_0}{\rho}.\]
Therefore, $\left(I^0_{\rho}\right)$ is satisfied for all $\rho<v_0$.

Finally, we will see that there exist $\rho_1,\,\rho_2\in (0,\infty)$ with $\rho_2>b(\rho_1)$ such that $\left(I^0_{\rho_1}\right)$ and $\left(I^1_{\rho_2}\right)$ hold.

We have proved that condition $(C_5)$ is satisfied for all $c\le e$. If we take $c\ge(1-e^{-1})\,e$ and we choose $\rho_1,\,\rho_2$ satisfying that $\rho_1<v_0$ and $\rho_2>v_0\,e^{-1}/(1-e^{-1}),$ 
then it is clear that $\left(I^0_{\rho_1}\right)$ and $\left(I^1_{\rho_2}\right)$ hold and 
\[b(\rho_1)<\frac{v_0}{c}\le \frac{v_0\,e^{-1}}{1-e^{-1}}<\rho_2. \]

Therefore, we conclude that problem \eqref{eqorr3} has at least a nontrivial solution in $K_\alpha$.

\end{document}